\def\<{\langle}
\def\>{\rangle}
\def\eps{{\varepsilon}}
\def\id{{\rm I}}
\def\sT{{\sf T}}
\def\proj{{\cal P}}
\def\projnd{{\mathcal{P}_{\sf nd}}}
\def\sign{{\operatorname{\rm{sgn}}}}
\def\P{{\mathbb P}}
\def\prob{{\mathbb P}}
\def\tprob{\widetilde{\mathbb P}}
\def\naturals{{\mathbb N}}
\def\E{{\mathbb E}} 
\def\tE{\widetilde{\mathbb E}} 
\def\der{\partial}
\def\betamin{\beta_{\rm min}}
\def\reals{\mathbb{R}}
\def\normal{{\sf N}}
\def\cA{{\cal A}}
\def\grad{{\nabla}}
\def\de{{\rm d}}
\def\obX{\overline{\mathbf{X}}}
\def\cG{\mathcal{G}}
\def\d{{\mathrm{d}}}
\def\ind{\mathbb{I}}
\newcommand\norm[1]{\lVert{#1}\rVert}
\newcommand\abs[1]{\lvert{#1}\rvert}
\def\bsl{\backslash}
\newcommand\myeqref[1]{{Eq.\,\eqref{#1}}}
\def\sC{{\sf Q}}
\def\sQ{{\sf Q}}
\def\hsC{{\widehat{\sf Q}}}
\def\bG{{\bf G}}
\def\cU{{\mathcal{U}}}
\def\ed{\stackrel{{\rm d}}{=}}
\def\convD{{\,\stackrel{\mathrm{d}}{\Rightarrow} \,}}
\def\sD{{\sf D}}
\def\sE{{\sf E}}
\def\sF{{\sf F}}
\def\sG{{\sf G}}
\def\sS{{\sf S}}
\def\ba{{\mathbf a}}
\def\be{{\mathbf e}}
\def\bx{{\mathbf x}}
\def\by{{\mathbf y}}
\def\bv{{\mathbf v}}
\def\bz{{\mathbf z}}
\def\bzdot{{\dot{\mathbf{z}}}}
\def\bu{{\mathbf u}}
\def\bw{{\mathbf w}}
\def\bX{{\mathbf X}}
\def\bA{{\mathbf A}}
\def\bZ{{\mathbf Z}}
\def\bG{{\mathbf G}}
\def\bS{{\mathbf S}}
\def\bN{{\mathbf N}}
\def\bP{{\mathbf P}}
\def\btN{{\mathbf{\widetilde{N}}}}
\def\bC{{\mathbf C}}
\def\bD{{\mathbf D}}
\def\sphere{{\mathbb{S}}}
\renewcommand{\b}[1]{\mathbf{#1}}
\newcommand{\bt}[1]{\mathbf{\tilde{#1}}}
\def\bsigma{{\boldsymbol{\sigma}}}
\def\bmu{{\boldsymbol{\mu}}}
\def\beps{{\boldsymbol{\eps}}}
\def\bta{{\mathbf{\tilde{a}}}}
\def\btz{{\mathbf{\tilde{z}}}}
\def\hbv{\mathbf{\widehat{v}}}
\def\hv{\widehat{v}}
\def\bfone{{\boldsymbol 1}}
\def\bU{{\mathbf U}}
\def\bB{{\mathbf B}}
\def\bV{{\mathbf V}}
\def\bfe{{\mathbf e}}
\def\btC{\bar{\mathbf C}}
\def\cU{{\mathcal U}}
\def\bP{{\mathbf P}}
\def\bSigma{{\mathbf \Sigma}}
\def\bDelta{{\mathbf \Delta}}
\def\hbSigma{{\mathbf{\widehat{\Sigma}}}}
\def\supp{{\rm supp}}
\def\MAD{{\rm MAD}}
\def\sPCA{\mbox{\tiny{\sc PC}}}
\newtheorem{claim}{Claim}[section]
\newtheorem{lemma}[claim]{Lemma}
\newtheorem{theorem}{Theorem}
\newtheorem{proposition}[claim]{Proposition}
\newtheorem{corollary}[claim]{Corollary}
\newtheorem{definition}[claim]{Definition}
\theoremstyle{definition}
\newtheorem{remark}[claim]{Remark}
\begin{document}

\title{Sparse PCA via Covariance Thresholding}

\author{Yash~Deshpande\thanks{Department of Electrical Engineering, Stanford University}
\and Andrea~Montanari\thanks{Departments of Electrical Engineering and Statistics, Stanford University}
     }
\maketitle

\begin{abstract}
In sparse principal component analysis we are given noisy observations
of a low-rank matrix of dimension $n\times p$ and seek to 
reconstruct it under additional sparsity assumptions.
In particular, we assume here each of  the principal components
$\bv_1,\dots,\bv_r$  has at most
$s_0$ non-zero entries. We are particularly interested in
the high dimensional regime wherein $p$ is comparable to, or even much larger
than $n$. 

In an influential paper, \cite{johnstone2004sparse} introduced a simple
algorithm that estimates the support of the principal vectors
$\bv_1,\dots,\bv_r$ by the largest entries
in the diagonal of the empirical covariance. 
This method can be shown to identify the correct support with high probability if
$s_0\le K_1\sqrt{n/\log p}$, and to fail with high probability if $s_0\ge
K_2 \sqrt{n/\log p}$ for two constants $0<K_1,K_2<\infty$. 
Despite a considerable amount of work over the last ten years, no
practical  algorithm exists with provably better support recovery guarantees.

Here we analyze a covariance thresholding algorithm that was recently
proposed by \cite{KrauthgamerSPCA}. On the basis of numerical simulations (for the rank-one case), 
these authors conjectured that covariance thresholding correctly recover the support with high probability for 
$s_0\le K\sqrt{n}$ (assuming $n$ of the same order as $p$).
We prove this conjecture, and in fact establish a more general guarantee including higher-rank
as well as $n$ much smaller than $p$.  
Recent lower bounds \cite{berthet2013computational, ma2015sum} suggest that no polynomial time 
algorithm can do significantly better.

The key technical component of our analysis develops new bounds on the norm of kernel
random matrices, in regimes that were not considered before. Using these, we also derive sharp bounds
for estimating the population covariance (in operator norm), and the principal component (in $\ell_2$-norm).
\end{abstract}

\newpage

\section{Introduction}

In the spiked covariance model proposed by \cite{johnstone2004sparse}, we are given data 
$\bx_1,\bx_2,\dots,\bx_n$ with $\bx_i\in \reals^p$ of the form\footnote{Throughout the paper, we follow the convention of denoting
scalars by lowercase, vectors by lowercase boldface, and
matrices by uppercase boldface letters.}:
\begin{align}
  \bx_i &= \sum_{q=1}^{r}\sqrt{\beta_{q}}\, u_{q,i} \,\bv _q+ \bz_i\,, \label{eq:model}
\end{align}
Here $\bv_1,\dots, \bv_r \in\reals^p$ is a set of orthonormal vectors, that we want to
estimate, while $u_{q,i} \sim\normal(0, 1)$ and $\bz_i \sim\normal(0, \id_p)$
are independent and identically distributed. The quantity $\beta_q\in
\reals_{>0}$ is a measure of signal-to-noise ratio. 
In the rest of this introduction, in order to simplify
the exposition, we will refer to the rank one case and drop the subscript $q\in\{1,2,\dots,r\}$. 
Further, we will assume $n$ to be of the same order as $p$.
Our results and proofs hold for a broad range of scalings of $r$, $p$, $n$, and will be stated in general form.

The standard method of principal component analysis  involves computing the
sample covariance matrix $\bG = n^{-1}\sum_{i=1}^n\bx_i\bx_i^{\sT}$ and estimates
$\bv=\bv_1$ by its principal eigenvector $\bv_{\sPCA}(\bG)$. It is a well-known
fact that, in the high dimensional regime, this yields an inconsistent estimate
(see \cite{johnstone2009consistency}). Namely $\|\bv_{\sPCA}-\bv\|\not\to 0$  unless $p/n \to 0$.
Even worse, \cite{baik2005phase} and  \cite{paul2007asymptotics} demonstrate the
following phase transition 
phenomenon. Assuming that $p/n\to \alpha \in (0, \infty)$,  if $\beta< \sqrt{\alpha}$ the estimate is \emph{asymptotically
orthogonal} to the signal, i.e. $\<\bv_{\sPCA},\bv\>\to 0$. On the other hand, for $\beta>\sqrt{\alpha}$,
$|\<\bv_{\sPCA},\bv\>|$ remains bounded away from zero as $n,p\to\infty$.
This phase transition phenomenon has attracted considerable attention
recently within random matrix theory
\cite{feral2007largest,capitaine2009largest,benaych2011eigenvalues,knowles2013isotropic}. 

These inconsistency results motivated several efforts to exploit
additional structural information on the signal $\bv$. 
In two influential papers,
\cite{johnstone2004sparse,johnstone2009consistency} considered the
case of a signal $\bv$ that is sparse in a suitable basis, e.g. in the
wavelet domain. Without loss of generality, we will assume here that $\bv$
is sparse in the canonical basis $\be_1$, \dots $\be_p$. 
In a nutshell, \cite{johnstone2009consistency} propose the following:
\begin{enumerate}
\item Order the diagonal entries of the Gram matrix
  $\bG_{i(1),i(1)}\ge \bG_{i(2),i(2)}\ge\dots\ge\bG_{i(p),i(p)}$, and let
  $J\equiv \{i(1),i(2),\dots,i(k)\}$ be the set of indices
  corresponding to the $s_0$ largest entries.
\item Set to zero all the entries $\bG_{i,j}$ of $\bG$ unless $i,j\in
  J$, and estimate $\bv$ with the principal eigenvector of the
  resulting matrix.
\end{enumerate}  
Johnstone and Lu formalized the sparsity assumption by requiring that $\bv$ belongs to a weak $\ell_q$-ball
with $q\in (0,1)$. 
\cite{amini2009high} studied 
the more restricted case when every entry of $\bv$ has equal
magnitude of $1/\sqrt{s_0}$. 
Within this restricted model, they proved
diagonal thresholding successfully recovers  the support of $\bv$
provided the sample size $n$ satisfies\footnote{Throughout the introduction, we write
  $f(n)\gtrsim g(n)$ as a shorthand of \emph{`$f(n)\ge K\, g(n)$ for a
  some constant $K = K(r,\beta)$'}.} $n \gtrsim s_0^2\log p$  
\cite{amini2009high}.
This result is a striking improvement over vanilla
PCA. While the latter requires a number of samples scaling with the
number of parameters $n \gtrsim  p$, sparse PCA via diagonal thresholding achieves the
same objective with a number of samples that scales with the
number of \emph{non-zero} parameters, $n\gtrsim s_0^2\log p$.

At the same time,  this result is not as strong as might have
been expected. By searching exhaustively over all possible supports
of size $s_0$ (a method that has complexity of order $p^{s_0}$)  the correct
support can be identified with high probability as soon as $n\gtrsim
s_0\log p$.  No method can succeed for much smaller
$n$, because of information theoretic obstructions. We refer the reader
to \cite{amini2009high} for more details. 

Over the last ten years, a significant effort has been devoted to developing practical
algorithms that outperform diagonal thresholding, see e.g.
\cite{moghaddam2005spectral,zou2006sparse,d2007direct,d2008optimal,witten2009penalized}.
In particular,  \cite{d2007direct} developed a
promising M-estimator based on a semidefinite programming (SDP)
relaxation. \cite{amini2009high} also carried out an
analysis of this method and proved that, if\footnote{Throughout the paper,
we denote by $K$ constants that can depend on problem parameters $r$
and $\beta$. We denote by upper case $C$ (lower case $c$) generic absolute constants that
are bigger (resp. smaller) than 1, but which might change from line 
to line.} \emph{(i)} $n\ge K(\beta)\, s_0\log(p-s_0)
p$,  and \emph{(ii)} the SDP solution has rank one, then the SDP
relaxation  provides a
consistent estimator of the support of  $\bv$. 

At first sight, this appears as a satisfactory solution of the
original problem.
No procedure can estimate the support of $\bv$ from less than $s_0\log
p$ samples, and the SDP relaxation succeeds in doing it from --at most-- a constant
factor more samples.
This picture was upset by a recent, remarkable result by \cite{KrauthgamerSPCA} who showed that
the rank-one condition assumed by Amini and Wainwright
does not hold for $ \sqrt{n}\lesssim s_0\lesssim (n/\log p)$. This
result is consistent with recent work of 
\cite{berthet2013computational}  demonstrating that sparse PCA cannot be
performed in polynomial time in the regime $s_0\gtrsim \sqrt{n}$, under
a certain computational complexity conjecture for the so-called
planted clique problem.

In summary, the sparse PCA problem demonstrates a fascinating
interplay between computational and statistical barriers.
\begin{description}
\item[From a statistical perspective,] and disregarding computational
  considerations, the support of $\bv$ can be estimated consistently 
 if and only if $s_0\lesssim n/\log p$. This can be done, for instance,
 by exhaustive search over all the $\binom{p}{s_0}$ possible supports of
 $\bv$. We refer to \cite{vu2012minimax,cai2013sparse} for a minimax analysis.
\item[From a computational perspective,] the problem appears to be
  much more difficult. There is rigorous evidence
  \cite{berthet2013computational,ma2015computational,ma2015sum, wang2014statistical} 
  that no polynomial
  algorithm can reconstruct the support unless $s_0\lesssim \sqrt{n}$. 
On the positive side, a very simple algorithm (Johnstone and Lu's
diagonal thresholding) succeeds for $s_0\lesssim \sqrt{n/\log p}$.
\end{description}
Of course, several elements are still missing in this emerging
picture. In the present paper we address one of them, providing
an answer to the following question:
\begin{quote}
\emph{Is there a polynomial time algorithm that is guaranteed to solve
  the sparse PCA problem with high probability for $\sqrt{n/\log
    p}\lesssim s_0\lesssim \sqrt{n}$?}
\end{quote}

We answer this question positively by analyzing a covariance
thresholding algorithm that proceeds, briefly, as follows.
(A precise, general definition, with some technical changes is given
in the next section.)
\begin{enumerate}
\item Form the empirical covariance matrix $\bG$ and set to zero all its entries that
  are in modulus smaller than $\tau/\sqrt{n}$, for $\tau$ a suitably
  chosen constant. 
\item Compute the principal eigenvector $\hbv_1$ of this thresholded
  matrix.
\item Estimate the support of $\bv$ by  thresholding $\bG\hbv_{1}$.
\end{enumerate}
Such a covariance thresholding approach was proposed in
\cite{KrauthgamerSPCA}, and is in turn related to earlier work by
\cite{bickel2008regularized, cai2010optimal}. The formulation
discussed in the next section presents some technical differences that
have been introduced to simplify the analysis. Notice that, to
simplify proofs, we assume $s_0$ to be known: this issue is discussed in
the next two sections.

The rest of the paper is organized as follows. In the next section we
provide a detailed description of the algorithm and state our main
results. The proof strategy for our results is explained in 
Section \ref{sec:proofStrategy}. Our theoretical results assume full knowledge of problem
parameters for ease of proof. In light of this, in Section \ref{sec:practical} we discuss a practical implementation
of the same idea that does not require prior knowledge of problem parameters, and is
data-driven. We also illustrate the method through simulations.
The complete proofs are in 
Sections \ref{sec:prelim}, \ref{sec:proofmain} and \ref{sec:proofcorr} respectively.

A preliminary version of this paper appeared in \cite{deshpande2014sparse}. 
This paper extends significantly the results in \cite{deshpande2014sparse}. 
In particular, by following an analogous strategy, we improve greatly the
bounds obtained by \cite{deshpande2014sparse}. This significantly improves the 
regimes of $(s_0, p, n)$ on which we can obtain non-trivial results. 
The proofs follow a similar strategy but are,
correspondingly, more careful.

%
%
\section{Algorithm and main results}

\begin{algorithm}
  \caption{Covariance Thresholding}
  \label{alg:ct}
  \begin{algorithmic}[1]
    \State {\bf Input:} Data $(\bx_i)_{1\le i\le 2n}$, parameter
   $s_0\in \naturals$, $\tau,\rho\in \reals_{\ge 0}$;
   \State Compute the empirical covariance matrices $\bG\equiv \sum_{i=1}^n\bx_i\bx_i^{\sT}/n$ , 
$\bG' \equiv \sum_{i=n+1}^{2n}   \bx_i\bx_i^\sT/n$;
    \State Compute $\hbSigma = \bG - \id_p$ (resp. $\hbSigma' = \bG'-\id_p$);
    \State Compute the matrix $\eta(\hbSigma)$  by soft-thresholding
    the entries of $\hbSigma$:
    \begin{align*}
      \eta(\hbSigma)_{ij}
&= \begin{cases}
        \hbSigma_{ij}-\frac{\tau}{\sqrt{n}} & \mbox{if
         $\hbSigma_{ij}\ge \tau/\sqrt{n}$,}\\
0& \mbox{if $-\tau/\sqrt{n}<\hbSigma_{ij}< \tau/\sqrt{n}$,}\\
  \hbSigma_{ij}+\frac{\tau}{\sqrt{n}} & \mbox{if
          $\hbSigma_{ij}\le -\tau/\sqrt{n}$,}
\end{cases}
   \end{align*}
   \State  Let $(\hbv_{q})_{q\le r}$ be the first $r$ eigenvectors of $\eta(\hbSigma)$;
   \State {\bf Output:} $\hsC = \{i\in [p]: \;\exists\, q \text{ s.t. } |(\hbSigma'\hbv_q)_i|\ge \rho \}$.
  \end{algorithmic} 
\end{algorithm}

We provide a detailed description of the covariance 
thresholding algorithm for the general model (\ref{eq:model}) in Table \ref{alg:ct}. 
For notational convenience, we shall assume that $2n$ sample vectors are given (instead of $n$): 
$\{\bx_i\}_{1\le i\le 2n}$. 

We start by splitting the data into  two halves: $(\bx_i)_{1\le i\le n}$ and
$(\bx_{i})_{n< i\le 2n}$ and compute the respective sample covariance matrices $\bG$ and
$\bG'$ respectively. Define $\bSigma$ to be the population covariance minus identity, i.e. 
\begin{align}
\bSigma \equiv \sum_{q=1}^r\beta_q\bv_q\bv_q^{\sT}\, \, .
\end{align}
Throughout, we let $\sC_q$ and $s_q$ denote the support of $\bv_q$ and its size respectively,
for $q\in\{1,2,\dots,r\}$.
We further let $\sC = \cup_{q=1}^r\sC_q$ and $s_0 = |\sC|$. 
The matrix $\bG$ is used, in steps $1$ to $4$ to obtain a good estimate $\eta(\hbSigma)$ 
for the low rank part of the population covariance $\bSigma$. 
The algorithm first computes $\hbSigma$, a centered version of the 
empirical covariance  as follows:
\begin{align}
    \hbSigma &\equiv \bG - \id_p,
\end{align}
where $\bG = n^{-1}\sum_{i\le n} \bx_i\bx_i^\sT$ is the sample covariance
matrix. 

It then obtains the estimate $\eta(\hbSigma)\in \reals^{p\times p}$
by \emph{soft thresholding} each entry of $\hbSigma$ at a threshold $\tau/\sqrt{n}$. 
Explicitly:
\begin{align}
    \big(\eta(\hbSigma)\big)_{ij}   &\equiv \eta\bigg( \hbSigma_{ij} ; \frac{\tau}{\sqrt{n}}\bigg)\, .
\end{align}
Here $\eta : \reals\times\reals_+ \to \reals$ is the soft thresholding function
\begin{align}
    \eta(z ; \lambda) &= \begin{cases}
        z - \lambda &\text{ if } z \ge \lambda \\
        -z + \lambda &\text{ if } z \le - \lambda \\
        0 &\text{ otherwise.}
    \end{cases}
\end{align}

In step $5$ of the algorithm, this estimate is used to construct good estimates $\hbv_q$ of the 
eigenvectors $\bv_q$. Finally, in step $6$, these estimates are combined with the (independent) second half of the data $\bG'$
to construct estimators $\hsC_q$ for the support of the individual eigenvectors 
$\bv_q$. In the first two subsections we will focus on the estimation of $\bSigma$
and the individual principal components. Our results on support recovery are provided
in the final subsection. 

\subsection{Estimating the population covariance}

Our first result bounds the estimation error of the 
soft thresholding procedure in operator norm.
\begin{theorem}    \label{thm:est}
There exist numerical constants $C_1, C_2, C>0$ such that the following happens.
Assume $n>C\log p$, $n>s_0^2$ and let $\tau_* = C_1(\beta\vee 1)\sqrt{\log (p/s_0^2)}$. We
keep the thresholding level $\tau$ according to
\begin{align}
    \tau = \begin{cases}
        \tau_* &\text{ when }\tau_* \le \sqrt{\log p}/2,\, s_0^2 \le p/e \\
        C_2 \tau_* &\text{ when } \tau_* \ge \sqrt{\log p}/2,\, s_0 \le p/e \\
        0 &\text{ otherwise.}
    \end{cases}
\end{align}
. Then 
  with probability $1-o(1)$:
    \begin{align}
\big\|\eta(\hbSigma) - \bSigma \big\|_{op}&\le C\,
\sqrt{\frac{s_0^2( \beta^2\vee 1)}{n}\,\Big(\log\frac{p}{s_0^2}\vee 1\Big)}\, .\label{eq:EstTheorem}
    \end{align}
\end{theorem}
At this point, 
it is useful to compare Theorem \ref{thm:est} with available results in the literature. 
Classical denoising theory \cite{DJ94a,johnstone2013function} provides upper bounds on the 
estimation error of soft-thresholding. However, estimation error is measured by (element-wise)  $\ell_p$ norm, while here we are interested
in operator norm.

\cite{bickel2008covariance, bickel2008regularized, karoui2008operator, cai2010optimal, cai2011adaptive} considered
  the  operator norm error of thresholding estimators for structured  covariance matrices. 
  Specializing to our case of exact sparsity, the result of \cite{bickel2008covariance} implies that, with high probability:
\begin{align}
      \big\|\eta_H(\hbSigma)-\bSigma \big\|_{op} 
\le      C_0\sqrt{\frac{s_0^2\log p}{n}}\,  .  \label{eq:bickelresult} 
\end{align}
Here $\eta_H(\cdot, \cdot)$ is the hard-thresholding function: $\eta_H(z) = z\ind(\abs{z} \ge \tau/\sqrt{n})$, and the threshold is chosen to
be $\tau= C_1\sqrt{\log p}$. Also, $\eta_H(\b{M})$  is the matrix obtained by thresholding the entries of $\b{M}$.
In fact, \cite{cai2012optimal} showed that the rate in (\ref{eq:bickelresult}) is minimax optimal over the class of sparse
population covariance matrices, with at most $s_0$ non-zero entries per row, under the 
assumption $s_0^2/n\le C(\log p)^{-3}$. 

Theorem \ref{thm:est} ensures consistency under 
a weaker sparsity condition, viz. $s_0^2/n\to 0$ 
is sufficient. Also, the resulting rate depends on $\log(p/s_0^2)$ instead of $\log p$. 
In other words, in order to achieve 
$\|\eta(\hbSigma)-\bSigma\|_{op}<\eps$ for a fixed $\eps$, it is sufficient $s_0\lesssim \eps\sqrt{n}$ as opposed to $s_0\lesssim \sqrt{n/\log p}$.

Crucially, in this regime for $s_0 = \Theta(\eps\sqrt{n})$, Theorem \ref{thm:est}  suggests a threshold of order 
$\tau =\Theta(\sqrt{\log (1/\eps)})$ as opposed to $\tau= C_1\sqrt{\log p}$ which is used in
\cite{bickel2008covariance,cai2012optimal}.
As we will see in Section \ref{sec:proofStrategy}, this regime mathematically more challenging than the one of \cite{bickel2008covariance,cai2012optimal}. By setting
$\tau= C_1\sqrt{\log p}$ for a large enough constant $C_1$, all the entries of $\hbSigma$ outside the support
of $\bSigma$ are set to $0$. In contrast, a large part of our proof is devoted to control the operator norm of the noise part of 
$\hbSigma$.

\subsection{Estimating the principal components}

We next turn to the question of estimating the principal components 
$\bv_1,\dots\bv_r$. Of course, these are not identifiable if
there are degeneracies in the population 
eigenvalues $\beta_1,\beta_2,\dots,\beta_r$. We thus introduce the following identifiability condition.
\begin{enumerate}
\item[{\sf A1}] The spike strengths $\beta_1>\beta_2>\dots\beta_r$ are all \emph{distinct}. 
 We denote by $\beta \equiv \max(\beta_1,\dots,\beta_r)$ and 
$\betamin \equiv  \min_{q\ne q'} (\beta_1-\beta_2,\beta_2-\beta_3,\dots,\beta_r)$. 
    Namely, $\beta$ is the largest signal strength and $\betamin$ is the minimum gap.
\end{enumerate}

We measure estimation error through the following loss, defined for 
$\bx,\by\in S^{p-1}\equiv \{\bv\in\reals^p:\; \|\bv\|=1\}$:
\begin{align}
L(\bx,\by) &\equiv \frac{1}{2}\, \min_{s\in \{+1,-1\}}\big\|\bx-s\, \by\big\|^2\\
  & = 1-|\<\bx,\by\>|\, .
\end{align}
Notice the minimization over the sign $s\in\{+1,-1\}$. This is required because the principal components
$\bv_1,\dots,\bv_r$ are only identifiable up to a sign. Analogous results can obtained for alternate
loss functions such as the projection distance:
\begin{align}
    L_p(\bx, \by) &\equiv \frac{1}{\sqrt{2}}\norm{\bx\bx^\sT - \by\by^\sT }_F = \sqrt{1-\<\bx, \by\>^2}.
\end{align}

The theorem below is an immediate consequence of Theorem \ref{thm:est}.
In particular, it uses the guarantee of Theorem \ref{thm:est} to 
show that the corresponding principal components of $\eta(\hbSigma)$
provide good estimates of the principal components $\bv_q, 1\le q \le r$.  
\begin{theorem}\label{thm:corr}
There exists a numerical constant $C$ such that the following holds.
    Suppose that Assumption {\sf A1} holds in addition to the conditions
     $n>C\log p$, $s_0^2<n$, and $s_0^2<p/e$.    
     Set $\tau$ as according to Theorem \ref{thm:est}, and 
    let $\hbv_1, \dots, \hbv_r$ denote the $r$ principal eigenvectors of $\eta(\hbSigma;\tau/\sqrt{n})$.
  Then, with probability $1-o(1)$
\begin{align}
    \max_{q\in [r]} L(\hbv_q,\bv_q)\le \frac{C}{\betamin^2}\; \frac{s_0^2( \beta^2\vee 1)}{n}\,\log\frac{p}{s_0^2}\, .
\end{align}
\end{theorem}
\begin{proof}
Let $\bDelta \equiv \eta(\hbSigma;\tau/\sqrt{n})-\bSigma$.
By Davis-Kahn sin-theta theorem \cite{davis1970rotation}, we have, for $\betamin>\|\bDelta\|_{op}$,
\begin{align}
L(\hbv_q,\bv_q)\le \frac{1}{2}\left(\frac{\|\bDelta\|_{op}}{\betamin-\|\bDelta\|_{op}}\right)^2\, .
\end{align}
For $\betamin^2>2C(s_0^2( \beta^2\vee 1)/n)\,\log(p/s_0^2)$, the claim follows by using Theorem \ref{thm:est}.
If $\betamin^2\le 2C(s_0^2( \beta^2\vee 1)/n)\,\log(p/s_0^2)$, the claim is obviously true since $L(\hbv_q,\bv_q)\le 1$ always.
\end{proof}

\subsection{Support recovery}

Finally, we consider the question of support recovery of the principal
components $\bv_q$. The second phase of our algorithm aims at estimating union of the
supports $\sC = \sC_1\cup \dots\cup\sC_r$ from the estimated principal
components $\hbv_q$. Note that, although $\hbv_q$ is 
not even expected to be sparse, it is easy to see that
the largest entries of $\hbv_q$ should have significant overlap
with  $\supp(\bv_q)$. Step 6 of the  algorithm exploit this 
property to construct a consistent estimator
$\hsC_q$ of the support of the spike $\bv_q$.

We will require the following assumption to ensure support recovery. 

\begin{enumerate}
\item[{\sf A2}] 
There exist constants $\theta,\gamma>0$ such that the following holds.
The non-zero entries of the spikes satisfy $|v_{q,i}|\ge \theta/\sqrt{s_0}$  for all $i\in \sC_q$. Further, for any $q, q'$
 $\abs{v_{q, i}/v_{q', i}} \le \gamma$ for every
 $i\in\sC_q\cap\sC_{q'}$. Without loss of generality, we will assume
 $\gamma\ge 1$.
\end{enumerate}

\begin{theorem}\label{thm:main}
  Assume the spiked covariance model of \myeqref{eq:model} satisfying
  assumptions {\sf A1} and {\sf A2}, and further $n>C\log p$, $s_0^2<n$, and $s_0^2<p/e$
for $C$ a large enough numerical constant. Consider the Covariance
Thresholding algorithm of Table \ref{alg:ct}, with $\tau$ as in Theorem \ref{thm:est} 
$\rho= \betamin\theta/(2\sqrt{s_0})$.

Then there exists $K_0 = K_0(\theta,\gamma,\beta,\betamin)$ such that, if
\begin{align}
n\ge K_0 s_0^2 r \log \frac{p}{s_0^2} \label{eq:Nsupport}
\end{align}
then the algorithm recovers the union of supports of $\bv_q$ with probability $1-o(1)$
(i.e. we have $\hsC = \sC$). 
\end{theorem}
The proof in Section \ref{sec:proofmain} also provides an explicit expression for
the constant $K_0$. 

\begin{remark}
    In Assumption {\sf A2}, the requirement on the minimum size of $|v_{q, i}|$ is 
    standard in support recovery literature \cite{wainwright2009sharp, meinshausen2006high}. Additionally, however, we require that when the supports of $\bv_q, \bv_{q'}$ overlap, they
    are of the same order, quantified by the parameter $\gamma$. Relaxing this condition
    is a potential direction for future work.
\end{remark}

\begin{remark}
  Recovering the signed supports $\sC_{q,+} = \{i\in[p] : v_{q, i} > 0\}$ and
  $\sC_{q,-} = \{i\in[p]: v_{q,i} <0\}$, up to a sign flip, is possible using the same technique
  as recovering the supports $\supp(\bv_q)$ above, and poses no additional difficulty.
\end{remark}

%
%
%

\section{Algorithm intuition and proof strategy}
\label{sec:proofStrategy}

For the purposes of exposition, throughout this section, 
we will assume that $r=1$ and drop the corresponding subscript
$q$. 

Denoting by $\bX\in\reals^{n\times p}$ the matrix with rows $\bx_1$,
\dots $\bx_n$, by $\bZ\in\reals^{n\times p}$ the matrix with rows $\bz_1$,
\dots $\bz_n$, and letting $\bu = (u_1,u_2,\dots,u_n)$, the model
(\ref{eq:model}) can be rewritten as
\begin{align}\label{eq:model2}
  \bX &= \sqrt{\beta}\, \bu \,\bv^{\sT} + \bZ\, .
\end{align}
Recall that $\hbSigma = n^{-1}\bX^\sT\bX-\id_p = \bG - \id_p$. 
For $\beta>\sqrt{p/n}$, the
principal eigenvector of $\bG$, and hence of $\hbSigma$ is positively
correlated with $\bv$, i.e.  $|\<\hbv_1(\hbSigma),\bv\>|$ is bounded away from zero. 
However, for $\beta<\sqrt{p/n}$, the noise component in $\hbSigma$ dominates 
and the two vectors become asymptotically
orthogonal, i.e. for instance $\lim_{n\to\infty}|\<\hbv_1(\hbSigma),\bv\>| =0$. 
In order to reduce the noise level, we must exploit the sparsity of
the spike $\bv$. 

Now, letting $\beta' \equiv \beta\|\bu\|^2/n\approx\beta$, and $\bw
\equiv \sqrt{\beta}\bZ^{\sT}\bu/n$, we can rewrite $\hbSigma$ as
\begin{align}
  \hbSigma &= \beta'\,\bv\bv^{\sT} + \bv\,\bw^{\sT}+\bw \, \bv^{\sT} + \frac{1}{n}\bZ^{\sT}\bZ\;\;
  - \id_p, . \label{eq:SigmaDef2}
\end{align}
For a moment, let us neglect the cross terms  $(\bv\bw^{\sT}+\bw
\bv^{\sT})$. The `signal' component $\beta'\,\bv\bv^{\sT}$ is sparse
with $s_0^2$ entries of magnitude $\beta'\theta^2/s_0$, which (in the regime of
interest $s_0 =\sqrt{n}/C$)  is equivalent to $C\theta^2 \beta/\sqrt{n}$. The `noise' component
$\bZ^{\sT}\bZ/n -\id_p$ is dense with entries of order $1/\sqrt{n}$.  
Assuming $s_0/\sqrt{n} < c$ for some small constant $c$, it should be possible to remove
most of the noise by thresholding the entries at level of order
$1/\sqrt{n}$. For technical reasons, we use the soft thresholding function  
$\eta:\reals\times\reals_{\ge 0}\to \reals, \, \eta(z; \tau) = \sign(z)(\abs{z}-\tau)_+$. 
We will omit the second argument from $\eta(\cdot; \cdot)$ wherever it is clear from context. 

      Consider again the decomposition \eqref{eq:SigmaDef2}. 
      Since the soft thresholding function $\eta(z; \tau/\sqrt{n})$ is affine 
  when $|z| \ge \tau/\sqrt{n}$, we would expect that the following decomposition
  holds approximately (for instance, in operator norm):
\begin{align} \label{eq:heurDecom}
  \eta(\hbSigma) &\approx \eta\left( \beta'\bv\bv^\sT \right) + \eta\left(  \frac{1}{n}\bZ^\sT\bZ -\id_p\right). 
\end{align}
Since $\beta' \approx \beta$ and each entry of $\bv\bv^\sT$ has magnitude at 
least $\theta^2/s_0$, the first term is still approximately rank one, with
\begin{align}
    \Big\lVert \eta\left( \beta' \bv\bv^\sT\right) - \beta \bv\bv^\sT \Big\rVert_{op}   &\le \frac{s_0\tau}{\sqrt{n}}. \label{eq:biasHeur}
\end{align}
This is straightforward to see since soft thresholding introduces a maximum bias of $\tau/\sqrt{n}$ per entry
of the matrix, while
the factor $s_0$ comes due to the support size of $\bv\bv^\sT$ (see Proposition \ref{prop:signal} below for a rigorous
argument). 

The main technical challenge now is to control the operator norm of
the perturbation $\eta(\bZ^\sT\bZ/n - \id_p)$. 
We know that $\eta(\bZ^\sT\bZ/n -\id_p)$ has entries of
variance $\delta(\tau)/n$, for $\delta(\tau) \approx \exp(-c\tau^2)$. 
If entries were independent with mild tail conditions, this would imply  --with high probability--
\begin{align}
  \bigg\lVert{\eta\bigg( \frac{1}{n}\bZ^\sT\bZ-\id_p \bigg)}\bigg\rVert_{op} %
  \le C\exp(-c\tau^2) \sqrt{\frac{p}{n}},\label{eq:kernRMnorm} 
\end{align}
for some constant $C$. Combining the bias bound from \myeqref{eq:biasHeur} and the
heuristic decomposition of \myeqref{eq:kernRMnorm} with the decomposition \eqref{eq:heurDecom} results in the bound
\begin{align}
    \Big\lVert \eta(\hbSigma) - \beta \bv\bv^\sT \Big\rVert_{op} &\le \frac{s_0\tau}{\sqrt{n}} + C\exp(-c\tau^2)\sqrt{\frac{p}{n}}.   
\end{align}
Our analysis formalizes this argument and shows that such a
bound is essentially correct when $p\le C\, n$. A modified bound is
proved for $p >C\, n$ (see Theorem \ref{thm:est_general} for a general statement).

The matrix $\eta\big(\bZ^\sT\bZ/n - \id_p\big)$ is a special
case of so-called inner-product kernel random matrices, which have
attracted 
recent interest within probability theory 
\cite{el2010information,el2010spectrum,cheng2013spectrum, fan2015spectral}. 
The basic object of study in this line of work is a matrix $\mathbf{M}\in \reals^{p\times p}$ of the type:
\begin{align}
    M_{ij} &= 
    f_n\bigg(\frac{\<\btz_i, \btz_j\>}{n} - \ind(i=j) \bigg)\,.\label{eq:kernelRMDef}
\end{align}
In other words, $f_n:\reals\to\reals$ is a kernel function and 
is applied entry-wise to the matrix $\bZ^\sT\bZ/n-\id_p$, with $\bZ$ a matrix
with independent standard normal entries as above and $\btz_i \in \reals^{n}$ are the columns
of $\bZ$. 

The key technical challenge in our proof is the analysis of the 
operator norm of such matrices, when $f_n$ is the
soft-thresholding function, with threshold of order $1/\sqrt{n}$. 
Earlier results 
are not general enough to cover this case.
  \cite{el2010information,el2010spectrum} provide conditions under which the 
  spectrum of $f_n(\bZ^\sT \bZ/n-\id_p)$
is close to a rescaling of the spectrum of $(\bZ^\sT \bZ/n-\id_p)$. 
We are interested  instead in a different regime in which the
spectrum of $f_n(\bZ^\sT \bZ/n-\id_p)$ is very different from the one of
$(\bZ^\sT \bZ/n-\id_p)$.
 \cite{cheng2013spectrum} consider $n$-dependent kernels, but their results are 
    asymptotic and concern the weak limit of the empirical spectral distribution of $f_n(\bZ^\sT\bZ/n-\id_p)$. 
    This does not yield an upper bound on the spectral norm
 of $f_n(\bZ^\sT\bZ/n-\id_p)$. Finally, \cite{fan2015spectral} consider the spectral norm of kernel random 
     matrices for smooth kernels $f$, only in the proportional regime $n/p\to  c\in (0,\infty)$.

Our approach to proving Theorem \ref{thm:est} follows instead the $\eps$-net method: we develop
high probability bounds on the maximum Rayleigh quotient:
\begin{align}
\max_{\by\in \sphere^{p-1}} \<\by, \eta(\bZ^\sT\bZ/n-\id_p)\by\> &= \max_{\by\in \sphere^{p-1}} \sum_{i, j }\eta\left( \frac{\<\btz_i, \btz_j\>}{n}; \frac{\tau}{\sqrt{n}} \right)y_i y_j,  
\end{align}
by discretizing $\sphere^{p-1} = \{\by\in\reals^p:\|\by\|=1\}$,  the unit sphere in $p$ dimensions.
For a fixed $\by$, the Rayleigh quotient $\<\by, \eta(\bZ^\sT\bZ/n -\id_p)\by\>$ is a (complicated)
function of the underlying Gaussian random variables $\bZ$. One might hope that it is Lipschitz
continuous with some Lipschitz constant $B = B(n, p, \tau, \by)$, thereby implying, by Gaussian
isoperimetry \cite{Ledoux}, that it concentrates to the scale $B$ 
around its expectation (i.e. 0). Then, by a standard union bound argument over a discretization
of the sphere, one would
obtain that the operator norm of $\eta\big(\bZ^\sT\bZ/n - \id_p\big)$ is typically no more than
$\sqrt{p}\sup_{\by\in \sphere^{p-1}} B(n, p, \tau, \by)$. 

Unfortunately, this turns
out not to be true over the whole space of $\bZ$, i.e. the Rayleigh
quotient is not Lipschitz continuous in the underlying Gaussian variables $\bZ$.
Our approach, instead, shows that for
\emph{typical} values of $\bZ$, we can control the gradient of $\<\by, \eta(\bZ^\sT\bZ/n - \id_p)\by\>$
with respect to $\bZ$, and extract the required concentration only from such local information
of the function. 
This is formalized in our concentration lemma \ref{lem:nonconvexConc}, 
which we apply extensively while proving Theorem \ref{thm:est}. This lemma
is a significantly improved version of the analogous result in \cite{deshpande2014sparse}.

\section{Practical aspects and empirical results}\label{sec:practical}

Specializing to the rank one case, Theorems \ref{thm:corr} and \ref{thm:main} show that
Covariance Thresholding succeeds with high probability for a number of
samples $n\gtrsim s_0^2$, while Diagonal Thresholding requires $n\gtrsim
s_0^2\log p$. The reader might wonder whether eliminating the $\log p$ factor
has any practical relevance or is a purely conceptual improvement.
Figure \ref{fig:supportRecovery} presents simulations on synthetic data under the
strictly sparse model, and the Covariance Thresholding algorithm of
Table \ref{alg:ct}, used in the proof of Theorem \ref{thm:main}. The
objective is to check whether the $\log p$ factor has an impact at
moderate $p$. We compare this with Diagonal Thresholding.
\begin{figure}[t]
  \includegraphics[width=0.3\linewidth]{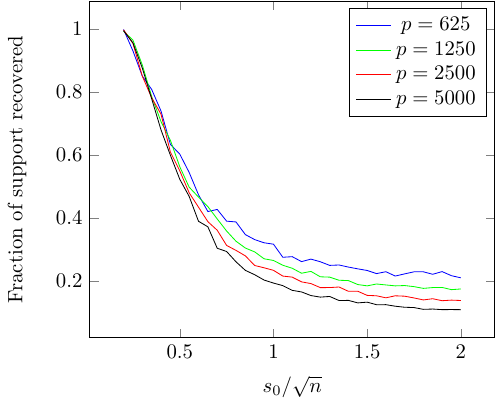}
  \includegraphics[width=0.3\linewidth]{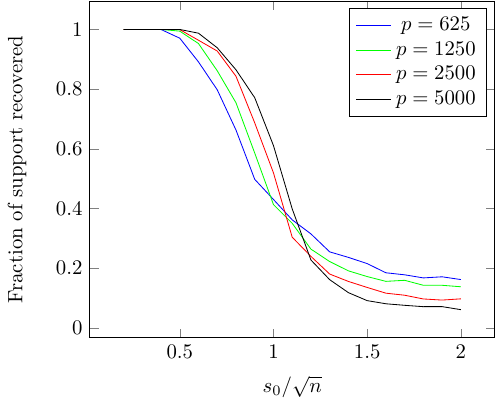}
  \includegraphics[width=0.3\linewidth]{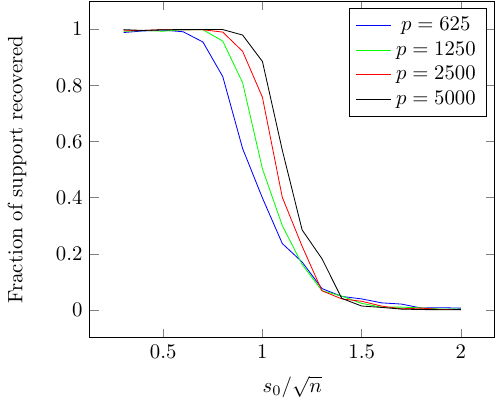}
  \caption{The support recovery phase transitions for Diagonal Thresholding (left) and
  Covariance Thresholding (center) and the data-driven version 
  of Section \ref{sec:practical} (right). For Covariance Thresholding, the
  fraction of support recovered correctly \emph{increases} monotonically with
  $p$, as long as  $s_0 \le c\sqrt{n}$ with $c\approx 1.1$. Further, it
  appears to converge to one throughout this region. For Diagonal
  Thresholding, the fraction of support recovered correctly
  \emph{decreases} monotonically with $p$ for all $s_0$ of order
  $\sqrt{n}$.
  This confirms that Covariance Thresholding (with or without knowledge
  of the support size $s_0$) succeeds with high
  probability for $s_0 \le c\sqrt{n}$, while Diagonal Thresholding
  requires a significantly sparser principal component. 
  \label{fig:supportRecovery}}
\end{figure}

We plot the empirical
success probability as a function of $s_0/\sqrt{n}$ for several values
of $p$, with $p=n$. The empirical success probability was computed by
using $100$ independent instances of the problem.
 A few observations are of interest: $(i)$ Covariance
Thresholding appears to have a significantly larger success
probability in the `difficult' regime where Diagonal Thresholding
starts to fail; $(ii)$ The curves for Diagonal
Thresholding appear to decrease monotonically with $p$ indicating that
$s_0$ proportional to $\sqrt{n}$  is not  the right scaling for this
algorithm  (as is
known from theory); $(iii)$ In contrast, the curves for Covariance
Thresholding become steeper for larger $p$, and, in particular, the
success probability increases with $p$ for $s_0\le 1.1\sqrt{n}$. This
indicates a sharp
threshold for $s_0 ={\rm const}\cdot\sqrt{n}$, as suggested by our theory.

In terms of practical applicability, our algorithm in Table \ref{alg:ct}
has the shortcomings of requiring knowledge of problem parameters
$s_0, \beta, \theta$. Furthermore, the thresholds $\rho, \tau$ suggested by theory need
not be optimal. We next describe a 
principled approach to estimating (where possible) the parameters
of interest and running the algorithm in a purely data-dependent manner.
Assume the following model, for $i\in [n]$
\begin{align*}
  \bx_i &= \bmu + \sum_q\sqrt{\beta_q}u_{q,i}\bv_q + \sigma\bz_i, 
\end{align*}
where $\bmu\in\reals^p$ is a fixed mean vector, $u_{q, i}$ have mean $0$
and variance $1$, and $\bz_i$ have mean $0$ and covariance $\id_p$. 
Note that our focus in this section is not on rigorous analysis, but instead to demonstrate
a principled approach to applying covariance thresholding in practice.
We proceed as follows:

\begin{description}
  \item [Estimating $\bmu$, $\sigma$:] 
    We let $\widehat\bmu = \sum_{i=1}^n \bx_i/n$ be the empirical mean estimate
    for $\mu$. Further letting $\obX=\bX-\mathbf{1}\widehat{\bmu}^\sT$ we 
    see that $pn-(\sum_q k_q)n \approx pn$ entries of $\obX$ are mean $0$ and variance $\sigma^2$. 
    We let $\widehat{\sigma} = {\MAD(\obX)}/{\nu}$ 
where $\MAD(\,\cdot\,)$ denotes the median absolute
    deviation of the entries of the  matrix in the argument, and $\nu$
    is a constant scale factor. Guided by  the
    Gaussian case, we  take $\nu = \Phi^{-1}(3/4) \approx 0.6745$.

  \item[Choosing $\tau$:]
    Although in the statement of the theorem, our choice of $\tau$ depends on the SNR $\beta/\sigma^2$,
    it is reasonable to instead
    threshold `at the noise level', as follows. The noise component of
    entry $i,j$ of the sample covariance (ignoring lower order
    terms) is given by $\sigma^2\<\bz_i, \bz_j\>/n$. By the central limit theorem, 
    $\<\bz_i, \bz_j\>/\sqrt{n} \convD \normal(0, 1)$. Consequently, $\sigma^2\<\bz_i, \bz_j\>/n \approx
    \normal(0, \sigma^4/n)$, and we need to choose the (rescaled) threshold proportional
    to $\sqrt{\sigma^4} = \sigma^2$. Using previous estimates, we let 
$\tau = \nu'\cdot \widehat{\sigma}^2$
    for a constant $\nu'$. In simulations, a choice $3\lesssim \nu' \lesssim 4$ appears to work well.

  \item[Estimating $r$:] 
      We  define $\hbSigma = \obX^\sT\obX/n-\widehat{\sigma}^2\id_p$ and soft threshold it
    to get $\eta(\hbSigma)$ using $\tau$ as above.
    Our proof of Theorem \ref{thm:corr} relies on the fact
    that $\eta(\hbSigma)$ has $r$ eigenvalues that are separated from the 
    bulk of the spectrum. Hence, we estimate $r$ using $\widehat{r}$: the number of
    eigenvalues separated from the bulk in $\eta(\hbSigma)$. The  edge of the
    spectrum can be computed numerically using the Stieltjes transform method as
    in \cite{cheng2013spectrum}. 
  \item[Estimating $\bv_q$:]
    Let $\hbv_q$ denote the $q^{\text{th}}$
    eigenvector of $\eta(\hbSigma)$. Our theoretical analysis indicates that $\hbv_q$ is
    expected to be close to $\bv_q$. In order to denoise $\hbv_q$, we assume
    $\hbSigma\hbv_q\approx (1-\delta)\bv_q + \beps_q$, where $\beps_q$ is additive random noise
    (perhaps with some sparse corruptions). 
    We then threshold $\hbSigma\bv_q$ `at the
    noise level' to recover a better estimate of $\bv_q$. To do this, we
    estimate the standard deviation of the ``noise'' $\beps$ by
$\widehat{\sigma_{\beps}}
    = {\MAD(\hbv_q)}/{\nu}$. Here we set --again guided by the Gaussian heuristic--  $\nu \approx 0.6745$. Since $\bv_q$ is sparse,
    this procedure returns a good estimate for the size of the noise
    deviation. We let $\hbv'_q$ denote the vector obtained by hard 
    thresholding $\hbv_q$: set  $\hbv'_i = \hbv_{q,i} \text{ if }
    \abs{\hv_{q,i}} \ge \nu' \widehat{\sigma}_{\beps_q}$
    and $
	0 \text{ otherwise.}$
    We then let $\hbv^*_q = \hbv'_q/\norm{\hbv'_q}$ and return $\hbv^*_q$
    as our estimate for $\bv_q$.

\end{description}
Note that --while different in several respects-- this empirical approach
shares the same philosophy of the algorithm in Table
\ref{alg:ct}.
On the other hand, the data-driven algorithm presented in this section  is less
straightforward to analyze, a task that we defer to future work.

Figure \ref{fig:supportRecovery} also shows results of a support recovery
experiment using the `data-driven' version of this section. Covariance thresholding 
in this form also appears to work for supports of size $s_0 \le \text{const}\sqrt{n}$. 
Figure \ref{fig:threePeak} shows the performance of vanilla PCA, Diagonal Thresholding
and Covariance Thresholding on the ``Three Peak'' example of \cite{johnstone2004sparse}. 
This signal is sparse in the wavelet domain and the simulations employ the data-driven
version of covariance thresholding. A similar experiment with the ``box'' example of Johnstone
and Lu is provided in Figure \ref{fig:block}. These experiments demonstrate that, while for  large values of $n$ both Diagonal
Thresholding and Covariance Thresholding perform well, the latter
appears superior for smaller values of $n$.

\begin{figure}[h] 
  \centering
  \includegraphics[scale=0.5]{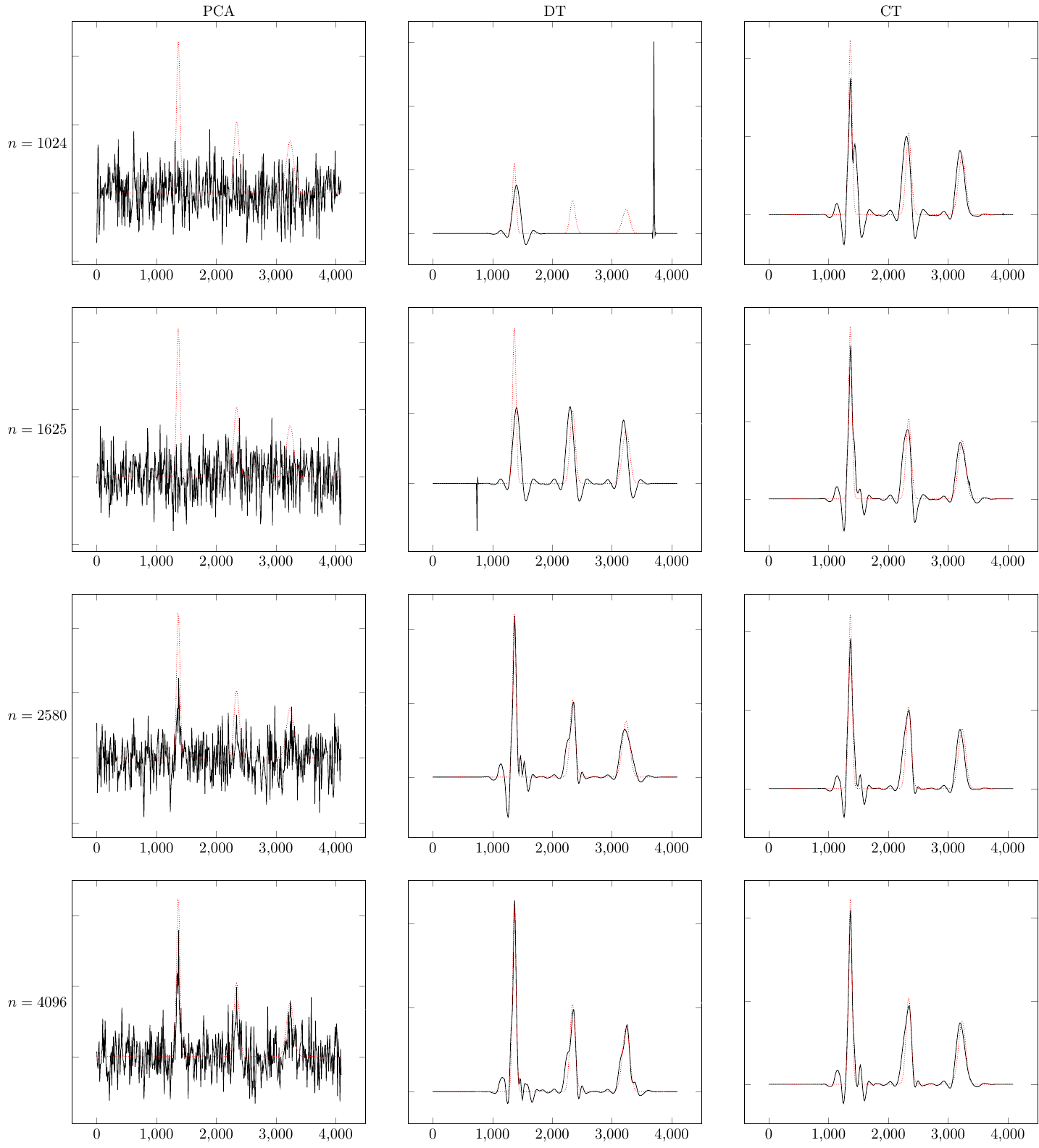}
  \caption{The results of Simple PCA, Diagonal Thresholding and Covariance Thresholding (respectively) for
    the ``Three Peak'' example of \cite{johnstone2009consistency} (see Figure 1
  of the paper). The signal is sparse in the `Symmlet 8' basis. We use $\beta = 1.4, p=4096$, and the rows correspond to sample sizes $n=1024, 1625, 2580, 4096$
  respectively. Parameters for Covariance Thresholding are chosen as in Section \ref{sec:practical},
  with $\nu' = 4.5$. Parameters for Diagonal Thresholding are from \cite{johnstone2009consistency}. On each curve,
  we superpose the clean signal (dotted).  \label{fig:threePeak}}
\end{figure}

\begin{figure}[h]
  \centering
  \includegraphics[scale=0.5]{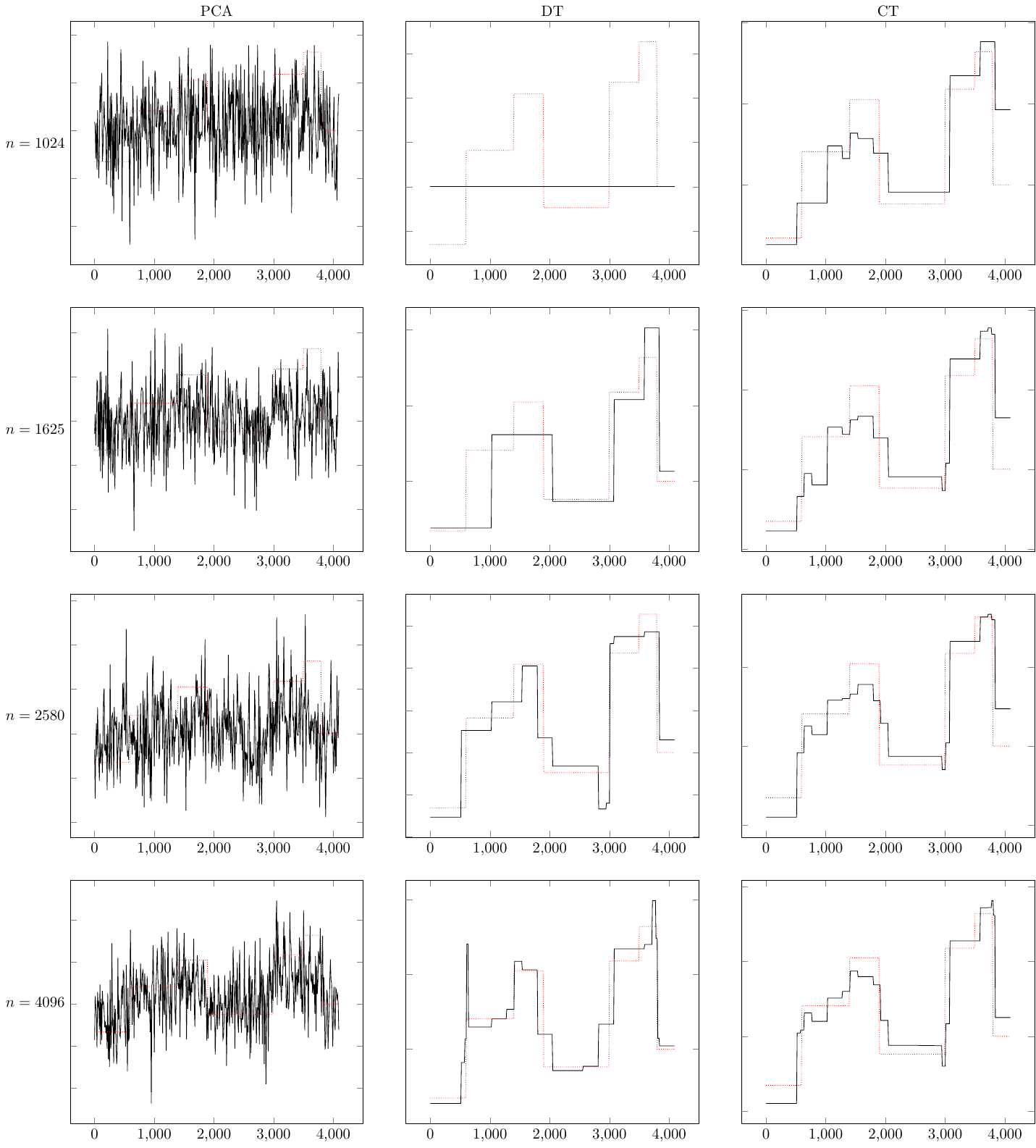}
  \caption{The results of Simple PCA, Diagonal Thresholding and Covariance Thresholding (respectively) for
    a synthetic block-constant function (which is sparse in the Haar wavelet basis). 
    We use $\beta = 1.4, p=4096$, and the rows correspond to sample sizes $n=1024, 1625, 2580, 4096$
  respectively. Parameters for Covariance Thresholding are chosen as in Section \ref{sec:practical},
  with $\nu' = 4.5$. Parameters for Diagonal Thresholding are from \cite{johnstone2009consistency}. On each curve,
  we superpose the clean signal (dotted).
}
  \label{fig:block}
\end{figure}

\section{Proof preliminaries}\label{sec:prelim}
In this section we review some notation and preliminary facts
that we will use throughout the paper. 

\subsection{Notation}

We let $[m] = \{1,2,\dots,m\}$ denote the set of first $m$ integers.
We will represent vectors using boldface lower case letters, 
e.g. $\bu, \bv, \bx$. The entries of a vector $\bu\in\reals^n$ will be
represented by $u_i, i\in[n]$.
Matrices are represented using boldface upper
case letters e.g. $\bA, \bX$. The entries of a matrix $\bA \in\reals^{m\times n}$ are 
represented by $\bA_{ij}$ for $i\in[m], j\in[n]$. 
Given a matrix $\bA\in\reals^{m\times n}$, we generically let $\ba_1$,
$\ba_2, \dots, \ba_m$ denote its rows, and $\bta_1$,
$\bta_2, \dots, \bta_n$ its columns. 

For $E\subseteq [m]\times [n]$, we define the projector operator
$\proj_E:\reals^{m\times n}\to \reals^{m\times n}$ by letting
$\proj_E(\bA)$ be the matrix with entries
\begin{align}
\proj_{E}(\bA)_{ij} = \begin{cases}
\bA_{ij} & \mbox{if $(i,j)\in E$,}\\
0 & \mbox{otherwise.}
\end{cases}
\end{align}
For a matrix $\bA\in\reals^{m\times n}$, and a set $E\subseteq[n]$, we
define its column restriction $\bA_{E}\in\reals^{m\times n}$ to be the
matrix obtained by setting to $0$ columns outside $E$:
\begin{align*}
  (\bA_{E})_{ij} &= \begin{cases}
    \bA_{ij} &\text{ if }j\in E,\\
    0 &\text{otherwise. }
  \end{cases}
\end{align*}
Similarly $\by_E$ is obtained from $\by$ by setting to zero all
indices outside $E$.
The operator norm   of a matrix $\bA$ is denoted by $\norm{\bA}$ (or
$\norm{\bA}_{op}$)
and its Frobenius norm by $\norm{\bA}_F$. We write $\norm{\bx}$ for the standard $\ell_2$ 
norm of a vector $\bx$. Other vector norms such as $\ell_1$ or $\ell_\infty$ are 
denoted with appropriate subscripts. 

We let $\sC_q$ denotes the support of the $q^{\text{th}}$
spike $\bv_q$. Also, we denote the union of the supports of $\bv_q$ by $\sC=\cup_q\sC_q$. 
The complement of a set $E\in[n]$ is denoted by $E^c$. 

We write $\eta(\cdot; \cdot)$ for the soft-thresholding function.
By $\der\eta(\cdot ; \tau)$ we denote the derivative of $\eta(\cdot; \tau)$ with respect
to the \emph{first} argument, which exists Lebesgue almost everywhere. To simplify the
notation, we omit
the second argument when it is understood from context. 

For a random variable $Z$ and a measurable set $\cA$  we write $\E\{Z; \cA\}$ to denote $\E\{Z\ind(Z\in\cA)\}$,
the expectation of $Z$ constrained to the event $\cA$. 

In the statements of our results, consider the limit of large $p$ and large
$n$ with certain conditions on $p, n$ (as in Theorem \ref{thm:est}). This limit will be referred to either as ``$n$ large enough''
or ``$p$ large enough'' where the phrase ``large enough'' indicates dependence of $p$ (and
thereby $n$) on specific problem parameters.

The Gaussian distribution function will be denoted by $\Phi(x)  =\int_{-\infty}^x e^{-t^2/2}\, \de t/\sqrt{2\pi}$.

\subsection{Preliminary facts}

Let $\sphere^{N-1}$ denote the unit sphere in $N$ dimensions, i.e. $\sphere^{N-1} = \{\bx\in\reals^N : \norm{\bx} = 1\}$.
We use the following definition \cite[Definition 5.2]{Vershynin-CS} of
the $\eps$-net of a set $X\subseteq\reals^n$:
\begin{definition}[Nets, Covering numbers]\label{def:nets}
  A subset $T^\eps(X)\subseteq X$ is called an $\eps$-net of $X$ if every point in
  $X$ may be approximated by one in $T^\eps(X)$ with error at most $\eps$. More precisely:
  \begin{align*}
    \forall x\in X,\quad \inf_{y\in T^\eps(X)} \norm{x - y} &\le \eps.
  \end{align*}
  The minimum cardinality of an $\eps$-net of $X$, if finite,  is called its covering number.
\end{definition}

The following two facts are useful while using $\eps$-nets to bound the
spectral norm of a matrix. For proofs, we refer the reader to \cite[Lemmas 5.2, 5.4]{Vershynin-CS}.
\begin{lemma}
  Let $S^{n-1}$ be the unit sphere in $n$ dimensions. Then there exists an $\eps$-net
  of $S^{n-1}$, $T^\eps(S^{n-1})$ satisfying:
  \begin{align*}
    |T^\eps(S^{n-1})| \le \left( 1+ \frac{2}{\eps} \right)^n.
  \end{align*}
  \label{lem:epsnetcard}
\end{lemma}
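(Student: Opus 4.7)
The plan is to use the classical volume-packing argument. First I would construct $T^\eps(S^{n-1})$ as a maximal $\eps$-separated subset of $S^{n-1}$, meaning a set $T \subseteq S^{n-1}$ such that $\|x - y\| \geq \eps$ for every distinct $x, y \in T$, and such that no point of $S^{n-1}$ can be added to $T$ without violating this separation. Such a maximal set exists by Zorn's lemma (or more concretely by a greedy procedure, since the sphere is compact and the process must terminate). I would then observe that maximality immediately implies that $T$ is an $\eps$-net: if some $x \in S^{n-1}$ had $\|x - y\| > \eps$ for every $y \in T$, we could adjoin $x$ to $T$, contradicting maximality.

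Next I would bound $|T|$ by a volume comparison in $\reals^n$. The open Euclidean balls of radius $\eps/2$ centered at the points of $T$ are pairwise disjoint, since any two centers are at distance at least $\eps$. On the other hand, each such ball is contained in the closed ball of radius $1 + \eps/2$ about the origin, since the centers lie on $S^{n-1}$. Denoting by $V_n$ the volume of the unit Euclidean ball in $\reals^n$, the disjoint union of the small balls has total volume $|T| \cdot (\eps/2)^n V_n$, and this must be at most $(1 + \eps/2)^n V_n$. Cancelling $V_n$ and dividing yields
\[
  |T| \;\leq\; \left(\frac{1 + \eps/2}{\eps/2}\right)^{\!n} \;=\; \left(1 + \frac{2}{\eps}\right)^{\!n},
\]
which is the claimed bound.

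There is no real obstacle here: the argument is entirely elementary, and the only subtlety is making sure the packing balls are genuinely contained in the enlarged ball (which uses only the triangle inequality and the fact that centers lie on the unit sphere). I would note for completeness that the conclusion actually holds for any $\eps > 0$, although the bound is only meaningful for $\eps < 1$ or so; for larger $\eps$, trivial $\eps$-nets of size $1$ or $2$ suffice and the stated inequality remains valid.
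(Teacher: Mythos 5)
Your proof is correct and is precisely the standard volume-packing argument. The paper does not prove this lemma itself but cites Vershynin's compressed-sensing lecture notes, where this identical argument (maximal $\eps$-separated set plus volume comparison of disjoint $\eps/2$-balls inside the ball of radius $1+\eps/2$) is given; so your approach coincides with the paper's cited source.
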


\begin{lemma}
    Let $\bA\in\reals^{n\times n}$ be a symmetric matrix. Then, there exists $\bx \in T^{\eps}(S^{n-1})$ such
    that
    \begin{align}
        \abs{ \<\bx, \bA\bx\>} &\ge (1-2\eps)\norm{\b{A}}.
    \end{align}
  \label{lem:specnormbnd}
\end{lemma}
\begin{proof}
    Firstly, we have $\norm{A} = \max_{\bx\in \sphere^{n-1}} \abs{\<\bx, \bA\bx\>} = \max_{\bx\in \sphere^{n-1}} \norm{\bA\bx}$. Let $\bx_*$ be the
    maximizer (which exists as $\sphere^{n-1}$ is compact and $|\<\bx, \bA\bx\>|$ is continuous in $\bx$). 
    Choose $\bx\in T^\eps_n$ so that $\norm{\bx - \bx_*} \le \eps$. Then:
    \begin{align}
        \<\bx, \bA\bx\> 
        &= \<\bx - \bx_*, \bA (\bx+\bx_*)\> + \<\bx_*, \bA\bx_*\> \, .
    \end{align}
    The lemma then follows as $\abs{\<\bx, \bA(\bx - \bx_*)\>} \le \norm{\bx+\bx_*}\norm{\bA}\norm{\bx-\bx_*}\le 2\eps\norm{\bA}$. 
\end{proof}

Throughout the paper we will denote by $T^\eps_N$ an $\eps$-net on the unit sphere
$\sphere^{N-1}$ that satisfies Lemma \ref{lem:epsnetcard}. For a subset of indices $\sS\subset [N]$ we denote
by $T^\eps_N(\sS)$ the natural isometric embedding of $T^\eps_{\sS}$ in $\sphere^{N-1}$.

We now state a general concentration lemma. This will be our basic tool to 
establish Theorem \ref{thm:corr}, and thereby Theorem \ref{thm:main}.

\begin{lemma} 
    \label{lem:nonconvexConc}
    Let $\bz \sim \normal(0, \id_N)$ be vector of $N$ i.i.d. standard normal
    variables. Suppose $S$ is a finite set and we have functions $F_s: \reals^N\to \reals$ for
    every $s\in S$. Assume $\cG \in \reals^N\times \reals^N$ is a Borel set such that for Lebesgue-almost every
    $(\bx, \by) \in \cG$:
    \begin{align}
        \max_{s\in S} \max_{t \in [0, 1]} \norm{\grad F_s (\sqrt{t}\bx + \sqrt{1-t}\by)} &\le L \, .
        \label{eq:gradCond}
    \end{align}
    Then,  for any $\Delta > 0$:
    \begin{align}
        \P\Big\{ \max_{s\in S} \abs{F_s(\bz) - \E F_s(\bz)} \ge \Delta \Big\} &\le C\abs{S}\,\exp\Big(-\frac{\Delta^2}{CL^2}\Big)  
         + \frac{C}{\Delta^2}  \E\Big\{\max_{s\in S} \big[(F_s(\bz) -  F_s(\bz'))^2 \big]; \cG^c \Big\}.
        \label{eq:concLem}
    \end{align}
    Here $\bz'$ is an independent copy of $\bz$. 
\end{lemma}
\begin{proof}
    We use the Maurey-Pisier method along with symmetrization. 
    By centering, assume that $\E F_s(\bz) = 0$ for all $s\in S$. Further, by
    including the functions $-F_s$ in the set $S$ (at most doubling its size), it suffices to prove the 
    one-sided version of the inequality:
    \begin{align}
        \P\{\max_{s\in S} F_s(\bz) \ge \Delta\} &\le 
        C\abs{S}\,\exp\Big(-\frac{\Delta^2}{CL^2}\Big) + \frac{C}{ \Delta^{2}}  \E\{\max_s (F_s(\bz)- F_s(\bz'))^2;  \cG^c\}\, .
        \label{eq:oneSidedDef}
    \end{align}

    We first implement the symmetrization. Note that:
    \begin{align}
        \{\bx: \max_s F_s(\bx) \ge \Delta\} &\subseteq \{\bx: \max_{x\in \reals, s\in S} [2x F_s(\bx) -x^2] \ge \Delta^2\} \\
        \{\bx, \by: \max_s [F_s(\bx) - F_s(\by)] \ge \Delta\} &\subseteq \{\bx, \by: \max_{x\in \reals, s\in S} [2x (F_s(\bx) - F_s(\by))  -x^2]\ge \Delta^2\}.
    \end{align}
    Furthermore, by centering, $F_s(\bz) = \E\{F_s(\bz) - F_s(\bz')|\bz\}$. Hence for any non-decreasing convex function
    $\phi(z)$:
    \begin{align}
        \E\Big\{ \phi\big(\max_{x, s} [2x F_s(\bz) - x^2] \big)\Big\} &\le \E\bigg\{\phi\Big(\max_{x, s} \big[\E\{2x F_s(\bz) - 2x F_s(\bz') - x^2  | \bz\} \big]\Big) \bigg\} \\
        &\stackrel{(a)}{\le} \E\bigg\{ \phi\Big(\E\big\{\max_{x, s} [2x(F_s(\bz) - F_s(\bz')) - x^2] |\bz \big\} \Big) \bigg\} \\
        &\stackrel{(b)}{\le} \E\Big\{ \phi\big( \max_{x, s} [2x(F_s(\bz) - F_s(\bz')) - x^2] \big) \Big\}. \label{eq:symm}
    \end{align}
    Here we use Jensen's inequality with the monotonicity of $\phi(\cdot)$ to obtain $(a)$ and with
    the convexity of $\phi(\cdot)$ to obtain $(b)$.

    Now we choose $\phi(z) = (z-a)_+$, for $a = \Delta^2 /2$. 
    \begin{align}
        \P\{\max_{s} F_s(\bz) \ge \Delta\} &\le \P\big\{\max_{x, s} [2x F_s(\bz) - x^2] \ge \Delta^2\big\} \\
        &\stackrel{(a)}{\le} \phi(\Delta^2)^{-1} \E\Big\{ \phi\big(\max_{x, s}[2x F_s(\bz) - x^2]\big)\Big\} \\
        &\stackrel{(b)}{\le} \phi(\Delta^2)^{-1} \E\Big\{ \phi\big(\max_{x, s} [2x (F_s(\bz) -  F_s(\bz')) - x^2]\big) \Big\} \\
        &= \phi(\Delta^2)^{-1} \E\Big\{ \phi\big( \max_s [(F_s(\bz) - F_s(\bz'))^2]\big) \Big\} \\
        &= \phi(\Delta^{2})^{-1} \bigg(\E\Big\{ \phi\big( \max_s [(F_s(\bz) - F_s(\bz'))^2 ]\big) ; \cG \Big\} \nonumber \\ 
        &\quad +  \E\Big(\big\{\phi(\max_s  [(F_s(\bz) - F_s(\bz'))^2] ; \cG^c \big\}\Big)\, . \label{eq:goodBadDecomp}
    \end{align}
    Here $(a)$ is Markov's inequality, and $(b)$ is the symmetrization bound \myeqref{eq:symm}, where
    we use the fact that $\phi(z) = (z-a)_+$ is non-decreasing and convex in $z$. 
    
    At this point, it is
    easy to see that the lemma follows if we are able to control the first term in \myeqref{eq:goodBadDecomp}. 
    We establish this 
    via the Maurey-Pisier method. Define the path $\bz(\theta) \equiv \bz \sin\theta + \bz'\cos \theta$,
    the velocity $\bzdot \equiv \d\bz/\d\theta = \bz \cos\theta - \bz'\sin\theta$.
    \begin{align}
        \E\Big\{ \phi\big( \max_s [(F_s(\bz) - F_s(\bz'))^2 ]\big) ; \cG \Big\}
        &= \int_{0}^{\infty} \P\Big\{ \big(\max_s[(F_s (\bz) - F_s(\bz'))^2] - a\big)_+ \ind(\cG) \ge x  \Big\} \d x\\
        &= \int_{0}^{\infty} \P\big\{ \max_s [\abs{F_s(\bz) - F_s(\bz')}] \ge \sqrt{x+a} ;  \cG \big\} \d x \\
        &\le 2|S| \int_{a}^{\infty} e^{-\lambda\sqrt{x}}\max_s\Big[\E\big\{\exp\{\lambda (F_s(\bz) - F_s(\bz'))\} ; \cG   \big\}\Big]\d x \, , \label{eq:uptoExpMom}
    \end{align}
    where, in the last inequality we use the union bound followed by Markov's inequality. To 
    control the exponential moment, note that $F_s(\bz) - F_s(\bz') = \int_{0}^{\pi/2} \<\grad F(\bz(\theta)), \bzdot(\theta)\> \d \theta$
    whence, using Jensen's inequality:
    \begin{align}
        \E\Big\{ \exp\big\{\lambda (F_s(\bz) - F_s(\bz'))\big\} ; \cG\Big\} &= \E\bigg\{\exp\Big(  \int_{0}^{\pi/2} \lambda\<\grad F_s (\bz (\theta)), \bzdot(\theta)\> \d \theta\Big) ; \cG \bigg\}  \\
        &\le \frac{2}{\pi}\int_{0}^{\pi/2} \E \Big\{ \exp \big( \lambda \pi\< \grad F_s(\bz(\theta)), \bzdot(\theta) \> /2\big) ; \cG \Big\} \d\theta.
        \label{eq:expmomcontrol}
    \end{align}
    Define the set $\cG_\theta = \{ (\bz, \bz'): \max_s \norm{\grad F_s (\bz(\theta))} \le L\}$. Then:
    \begin{align}
        \E\Big\{ \exp\big\{\lambda (F_s(\bz) - F_s(\bz'))\big\} ; \cG\Big\} & %
\stackrel{(a)}{\le}\frac{2}{\pi}\int_{0}^{\pi/2} \E \Big\{ \exp \big( \lambda \pi\< \grad F_s(\bz(\theta)), \bzdot(\theta) \> /2\big) ; \cG_\theta\Big\} \d \theta \\
&\stackrel{(b)}{=}  \frac{2}{\pi} \int_{0}^{\pi/2} \E\bigg\{ \exp \Big(  \frac{\lambda^2\pi^2 \norm{\grad F_s(\bz(\theta))}^2}{8} ; \cG_\theta \Big)\bigg\}\d\theta \\
&\stackrel{(c)}{\le} \exp \Big(  \frac{\lambda^2 \pi^2 L^2}{8}\Big).%
        \label{eq:expmomcontrol2}
    \end{align}
    Here $(a)$ follows as $\cG_\theta\supseteq\cG$. Equality $(b)$ follows from noting that $\cG_\theta$ 
    is measurable with respect to $\bz(\theta)$ and, hence, first integrating with respect to $\bzdot(\theta) = \bz\cos\theta - \bz'\sin\theta$, a Gaussian
    random variable that is independent of $\bz(\theta)$. The final inequality $(c)$ follows by using the fact that $\norm{\grad F_s(\bz(\theta))} \le L$ on the 
    set $\cG_\theta$. 

    Since this bound is uniform over $s\in S$, we can use it in \eqref{eq:uptoExpMom}:
    \begin{align}
        \E\left\{ \phi( \max_s (F_s(\bz) - F_s(\bz'))^2) ; \cG \right\} %
        &\le 2|S| \int_{a}^{\infty} \exp \Big(    -\lambda \sqrt{x} + \frac{\lambda^2\pi^2 L^2}{8}\Big) \d x \\
        &\le \frac{4|S|}{\lambda^2} (1+\lambda\sqrt{a}) \exp\Big(   - \lambda \sqrt{a} + \frac{\lambda^2\pi^2L^2}{8}  \Big)
    \end{align}
    We can now set $\lambda = 4\sqrt{a}/\pi^2 L^2$, to obtain the exponent above as $ - 2 a/ \pi^2 L^2 = -\Delta^2/\pi^2L^2$. 
    The prefactor $(1+\lambda\sqrt{a})\lambda^{-2}$ is bounded by $CL^2\max(, L^2/\Delta^2)$ when $a = \Delta^2/2$. Therefore, as
    required, we obtain:
    \begin{align}
        \E\left\{ \phi( \max_s (F_s(\bz) - F_s(\bz'))^2) ; \cG \right\} %
    &\le C\max(1, L^4/\Delta^4) \exp\Big(-\frac{\Delta^2}{CL^2}\Big)%
        \label{eq:goodSetBound}
    \end{align}
    Combining this with \myeqref{eq:goodBadDecomp} and the fact that $\phi(\Delta^2)^{-1} \le C\Delta^{-2}$ gives \myeqref{eq:oneSidedDef}
    and, consequently, the lemma. 
\end{proof}

By a simple application of Cauchy-Schwarz, this lemma implies the following.
\begin{corollary}\label{corr:nonconvexCS}
    Under the same conditions as Lemma \ref{lem:nonconvexConc},
    \begin{align}
        \P\Big\{ \max_{s\in S} \abs{F_s(\bz) - \E F_s(\bz)} \ge \Delta \Big\} &\le C\abs{S}\,\exp\Big(-\frac{\Delta^2}{CL^2}\Big)  \nonumber\\ %
        &\quad + \frac{C}{\Delta^2}  \E\Big\{\max_{s\in S} \big[(F_s(\bz) -  F_s(\bz'))^4 \big]\Big\}^{1/2} \P\{\cG^c \}^{1/2}.%
    \end{align}
\end{corollary}
The following two lemmas are well-known concentration of measure results. 
The forms below can be found in \cite[Corollary 5.35]{Vershynin-CS}, \cite[Lemma 1]{laurent2000adaptive}
respectively.
\begin{lemma}\label{lemma:NormWishart}
  Let $\bA\in\reals^{M\times N}$ be a matrix with i.i.d. standard normal
  entries, i.e. $\bA_{ij}\sim\normal(0, 1)$. Then, for every $t\ge 0$:
  \begin{align}
      \P\left\{ \norm{\bA}_{op} \ge \sqrt{M} + \sqrt{N} + t \right\} &\le \exp\left( -\frac{t^2}{2} \right).
  \end{align}
  \label{lem:gaussianmatnorm}
\end{lemma}

\begin{lemma}
    Let $\bz\sim\normal(0, \id_N)$. Then
    \begin{align}
        \P\{ \norm{\bz}^2 \ge N + 2\sqrt{N t} + 2t\} &\le \exp(-t).\label{eq:chisquare}
    \end{align}
    \label{lem:chisquare}
\end{lemma}

\section{Proof of Theorem \ref{thm:est}}\label{sec:proofcorr}

Since $\hbSigma = \bX^{\sT}\bX/n - \id_p$, we have: 
\begin{align}\label{eq:empcov}
  \hbSigma &= \sum_{q=1}^r \left\{\frac{\beta_q\norm{\bu_q}^2}{n}\bv_q(\bv_q)^\sT + \frac{\sqrt{\beta_q}}{n}
\big(\bv_q(\bZ^{\sT}\bu_q)^\sT+ (\bZ^\sT\bu_q)\bv_q^\sT\big)\right\}  \nonumber\\
  &\quad+ \sum_{q\ne q'} \left\{\frac{\sqrt{\beta_q\beta_{q'}}\<\bu_q,
  \bu_{q'}\>}{n}\bv_q(\bv_{q'})^\sT \right\} + \frac{\bZ^\sT\bZ}{n} - \id_p\, .
\end{align}
We let $\sD =\{(i, i) : i\in[p]\setminus \sQ\}$ be the diagonal
entries not included in any support. (Recall that $\sC=\cup_q\sC_q$ denote the
union of the supports.) Further let  $\sE = \sC\times\sC$, $\sF = (\sC^c\times\sC^c)\bsl\sD$,
and $\sG = [p]\times[p]\bsl(\sD\cup\sE\cup\sF)$, or, equivalently $\sG = (\sC\times \sC^c)\cup(\sC^c\times \sC)$. 
Since these are disjoint we have:
\begin{align}
  \eta(\hbSigma) &=  \underbrace{\proj_{\sE}\left\{ \eta(\hbSigma) \right\}}_{\bS}  %
  +\underbrace{\proj_{\sF}\left\{ \eta\left( \hbSigma \right) \right\}}_{\bN}  %
  +\underbrace{\proj_{\sG}\left\{ \eta(\hbSigma) \right\}}_{\bC}%
  +\underbrace{\proj_{\sD}\left\{ \eta( \hbSigma) \right\}}_{\bD}. \label{eq:decomp}
\end{align}
The first term corresponds to the `signal' component, while
the last three terms correspond to the `noise' component. 

Theorem
\ref{thm:est} is a direct consequence of the next five propositions.
The first demonstrates that, even for a low level
of thresholding, viz. $\tau <\sqrt{\log p}/2$, 
the term $\b{N}$ has small operator norm.
The second demonstrates that the soft thresholding operation preserves
the signal in the term $\b{S}$. The next two propositions show that the
cross and diagonal terms $\b{C}$ and $\b{D}$ are negligible as well. 
Finally, in the last proposition, we demonstrate that, 
for the regime of thresholding far above the noise level, i.e. $\tau>C\sqrt{\log p}$, 
the noise terms $\b{N}$ and $\b{C}$ vanish entirely.

\begin{proposition} \label{prop:noise}
  Let $\bN$ denote the second term
  of \myeqref{eq:decomp}. Since $\sF = \sC^c \times \sC^c \bsl \sD$,  
  \begin{align}
      \bN &= \proj_{\sF} \left( \eta(\hbSigma) \right) = \proj_{\sF}\left\{\eta\left( \frac{1}{n}\bZ^\sT \bZ \right)\right\}.
  \end{align}
  Then, there exists an absolute constant $C$ such that
  the following happens. Assuming that $(i)$ $\tau < \sqrt{\log p}/2$ and $(ii)$ $n > C\log p$,
  then with  probability $1-o(1)$
\begin{align}
   \norm{\bN}_{op} \le C\,  \left(\sqrt{\frac{p}{n}} \vee \frac{p}{n}\right)\, e^{-\tau^2/C}\, .
\end{align}
\end{proposition}
\begin{proposition}\label{prop:signal}
  Let $\bS$ denote the  first term in 
  \myeqref{eq:decomp}: 
  \begin{align}
    \bS &= \proj_\sE\left\{\eta(\hbSigma)\right\}.
  \end{align}
  Assume that $(i)$ $s_0/n < 1$ and $(ii) n > C\log p$:
  Then with probability $1-o(1)$:
\begin{align}
\big\| \bS-\bSigma\big\|_{op}\le\frac{2\tau s_0}{\sqrt{n}} +C(\beta\vee 1)\sqrt{\frac{s_0}{n}}\, .
\end{align}
\end{proposition}
\begin{proposition}\label{prop:cross}
    Let $\b{C}$ denote the matrix corresponding to the third term
  of \myeqref{eq:decomp}:
  \begin{align*}
      {\b{C}} &= \proj_{\sG}\left\{\eta(\hbSigma)\right\}.
  \end{align*}
  Assuming the conditions of Proposition \ref{prop:noise} and, additionally, 
  that $s_0^2 \le p$, there exist constants $C, c$ such that with probability $1-o(1)$
\begin{align}
    \norm{\b{C}}_{op} &\le C\,\tau e^{-c\tau^2/(\beta\vee 1)} \sqrt{\frac{p}{n}}\vee \frac{p}{n}.  
\end{align}
\end{proposition}

\begin{proposition}\label{prop:diag}
    Let $\b{D}$ denote the matrix corresponding to the third term
  of \myeqref{eq:decomp}:
  \begin{align*}
      \b{D}&= \proj_{\sD}\left\{\eta(\hbSigma)\right\}.
  \end{align*}
  With probability $1-o(1)$ we have that $\norm{\b{D}}_{op} \le C\sqrt{n^{-1}\log p}$.
\end{proposition}

\begin{proposition}
    \label{prop:largeThresh}
    For some absolute constant $C_0$, we have for $\tau \ge C_0 (\beta \vee 1) \sqrt{\log p}$
    that, with probability $1-o(1)$:
    \begin{align}
        \forall i, j\quad N_{ij} = C_{ij} = 0.
    \end{align}
    Therefore, $\norm{\b{N}}_{op} = 0$ and $\norm{\b{C}}_{op}= 0$.
\end{proposition}

\begin{remark}
    At this point we remark that the probability $1-o(1)$ can be made
    quantitative, for e.g. of the form $1-\exp(-\min(\sqrt{p}, n)/C_1)$, for
every $n$ large enough. For simplicity of exposition we do not pursue this 
in the paper. 
\end{remark}

We defer the proofs of Propositions \ref{prop:noise},
\ref{prop:signal}, \ref{prop:cross}, \ref {prop:diag} and \ref{prop:largeThresh} to
Sections \ref{subsec:proofnoise}, \ref{subsec:proofsignal},
\ref{subsec:proofcross},  \ref{subsec:proofdiag} and \ref{subsec:prooflargethresh}
respectively.
By combining them for $\beta=O(1)$,  we immediately obtain the following bound.
\begin{theorem}
    \label{thm:est_general}
There exist numerical constants $C_0,C_1$ such that the following happens.
   Assume $\beta\le C_0$, $n>C_1\log p$    and $\tau \le \sqrt{\log p}/2$. Then 
  with probability $1-o(1)$:
    \begin{align}
\big\|\eta(\hbSigma) - \bSigma \big\|_{op}&\le \frac{2\tau s_0}{\sqrt{n}} +C\Big(\sqrt{\frac{p}{n}} \vee \frac{p}{n}\Big)\, e^{-\tau^2/C}
+C\sqrt{\frac{s_0\vee \log p}{n}}
\, .
    \end{align}
\end{theorem}
\begin{proof}
The proof is obtained by adding the error terms from Propositions \ref{prop:noise},
\ref{prop:signal}, \ref{prop:cross} and \ref {prop:diag}, and noting that $\beta$ is bounded.
\end{proof}

Using Propositions \ref{prop:noise}, \ref{prop:signal}, \ref{prop:cross} and \ref{prop:diag}, together with a suitable choice of 
$\tau$, we obtain the proof of Theorem \ref{thm:est}.
\begin{proof}[Proof of Theorem \ref{thm:est}]
Note that in the case $s_0^2>p/e$ there is no thresholding and hence the result follows from the fact that $\|\hbSigma-\bSigma\|_{op}\le C\sqrt{p/n}$
\cite[Remark 5.40]{Vershynin-CS}. 

We assume now that $s_0^2\le p/e$ and the case that $\tau_* = C_1(\beta\vee 1)\sqrt{\log(p/s_0^2)} \le \sqrt{\log p}/2$. In that case we set $\tau = \tau_* \le \sqrt{\log p}/2$.
Below we will keep $C_1$ a large enough constant, and check that each of the error terms 
in Propositions \ref{prop:noise}, \ref{prop:signal}, \ref{prop:cross} and \ref{prop:diag} is upper bounded by (a constant times) the right-hand side of 
Eq.~(\ref{eq:EstTheorem}). Throughout $C$ will denote a generic constant that can be made as large as we want, and can change from line to line.

We start from Proposition \ref{prop:noise}:
\begin{align}
\norm{\bN}_{op} &\le C\,  \left(\sqrt{\frac{p}{n}} \vee \frac{p}{n}\right) \,\left(\frac{s_0^2}{p}\right)^C\\ 
&\le C\sqrt{\frac{p}{n}  \left(\frac{p}{s_0^2}\right)^{-C-1}} \;\vee\; C\sqrt{\left(\frac{p}{n}\right)^2  \left(\frac{p}{s_0^2}\right)^{-C-2}}\\
&\le C\sqrt{\frac{s_0^2}{n}  \left(\frac{p}{s_0^2}\right)^{-C}} \;\vee\; C\sqrt{\left(\frac{s_0^2}{n}\right)^2  \left(\frac{p}{s_0^2}\right)^{-C}}\\
&\le C\sqrt{\frac{s_0^2}{n}  \log \frac{p}{s_0^2}} \, , \label{eq:noiseBoundThmEst}
\end{align}
where in the last step we used $(e\, s_0^2/p), (s_0^2/n)\le 1$.

Next consider Proposition \ref{prop:signal}:
\begin{align}
\big\| \bS-\bSigma\big\|_{op}&\le C\sqrt{\frac{s_0^2\tau^2 }{n}}+C\sqrt{\frac{s_0(\beta\vee 1)^2}{n}}\\
&\le C\sqrt{\frac{s_0^2( \beta^2\vee 1)}{n}  \log \frac{p}{s_0^2}} \, .
\end{align}

From Proposition \ref{prop:cross}, we get, using the same argument as in \myeqref{eq:noiseBoundThmEst}
\begin{align}
    \norm{\b{C}}_{op} &\le C\sqrt{\beta\vee 1}\, \bigg(\sqrt{\frac{p}{n}}\vee \frac{p}{n}\bigg)  \, \left(\frac{s_0^2}{p}\right)^C\\
    &\le C(\beta\vee 1) \sqrt{\frac{s_0^2}{n}\log \frac{p}{s_0^2}}.
\end{align}

Finally, the term of Proposition \ref{prop:diag} is also bounded as desired using $\log p\le s_0^2\log (p/s_0^2)$
(dividing both sides by $p$ and using the fact that $x\mapsto x\log(1/x)$ is increasing).

The case of $\tau_* \ge \sqrt{\log p}/2$ is easier. In that case, we can keep $\tau=C_2\tau_*$
with $C_2$ large enough so that $\tau \ge C_0 (\beta\vee 1)\sqrt{\log p}$  for $C_0$ of Proposition
\ref{prop:largeThresh}. Then, by Proposition \ref{prop:largeThresh}, 
we know that $\b{N} = 0$ and $\b{C}= 0$. Therefore we only need consider the terms
$\b{S} - \bSigma$ and $\b{D}$. For these terms we can use Propositions \ref{prop:signal}
and \ref{prop:diag} respectively and, arguing 
as in the earlier case $\tau_* \le \sqrt{\log p}$, we obtain the desired
result. 
\end{proof}

\subsection{Proof of Proposition \ref{prop:noise}}\label{subsec:proofnoise}

Define $\btN$ as
\begin{align*}
  \btN &= \projnd\left\{\eta\left( \frac{1}{n}\bZ^\sT\bZ \right)\right\}.
\end{align*}
Since $\bN$ is a principal submatrix of $\btN$, it suffices to prove the same bound for
$\btN$. Our main tool in the proof will be the concentration lemma \ref{lem:nonconvexConc} which we
use on multiple occasions. With
a view to using the lemma, we let let $\bZ' \in \reals^{n\times p}$ denote an independent copy of $\bZ$, and $\btz'_i$ it's $i^\text{th}$
column. The proof relies on two preliminary lemmas. For some $A\ge 1$ (to be chosen later), we first state and
prove the following lemma that controls
the norm of \emph{any principal submatrix of }$\bt{N}$ of size at most $p/A$. 

\begin{lemma}
    \label{lem:smallSetLemma}
    Fix any $A\ge 1$. There exists an absolute constants $C, c$ such that:
    \begin{align}
        \P\Big\{ \max_{\sS \subseteq[p], |\sS|\le p/A} \|\proj_{\sS\times \sS} ( \bt{N}) \|_{op}\ge \Delta\Big\} &\le 
        C\, \exp\Big( p\frac{\log CA}{A} -  \frac{n^2\Delta^2}{C(n+p)} \Big) + C\frac{(np)^C}{\Delta^2} \exp(-cn).
    \end{align}
\end{lemma}
\begin{proof}
    For any subset $\sS \subset [p]$ recall that  $T^\eps_p(\sS)$ denotes an $\eps$-net of unit vectors in $\sphere^{p-1}$ supported
    on the subset $\sS$. For simplicity let $T(A) = \cup_{\sS:|\sS|\le p/A} T^\eps_p(\sS)$. 
    It suffices, by Lemma \ref{lem:specnormbnd}, to control $\<\b{y},\bt{N}\b{y}\>$ on the
    set $T(A)$. In particular:
    \begin{align}
        \P\Big\{ \max_{\sS \subseteq[p], |\sS|\le p/A} \|\proj_{\sS\times \sS} ( \bt{N})\|_{op} \ge \Delta\Big\} &\le  
        \P\Big\{ \max_{\b{y}\in T(A)} |\<\b{y}, \bt{N}\b{y}\>|  \ge \Delta(1-2\eps)\Big\}. \label{eq:epsNetSmall}
    \end{align}
    Consider the good set $\cG_1$ given by:
    \begin{align}
        \cG_1 &= \{ (\bZ, \bZ'): \max( \norm{\bZ}, \norm{\bZ'} ) \le  \sqrt{2}(\sqrt{n} + \sqrt{p}))\}.
        \label{eq:goodDefSmallSet}
    \end{align}
    To use Lemma \ref{lem:nonconvexConc}, we need to compute $\E\<\b{y}, \bt{N}\b{y}\>$ and the gradient of $\<\b{y}, \bt{N}\b{y}\>$
    with respect to the underlying random variables $\bZ$. Since $\eta(\cdot)$ is an odd function
    the expectation vanishes. To compute
    the gradient, we let $t\in[0, 1]$ and $\b{W} = \sqrt{t}\bZ + \sqrt{1-t}\bZ'$, 
    and consider $\<\b{y}, \bt{N}\b{y}\> = \<\b{y}, \eta(\b{W}^\sT\b{W}/n)\b{y}\>$ as a function of the
    $\b{W}$. Taking the gradient with respect to a column $\bt{w}_\ell$ for $\ell\in \sS$:
    \begin{align}
        \grad_{\bt{w}_{\ell}} \<\b{y}, \bt{N}\b{y}\> &= \frac{y_\ell}{n}\sum_{i\ne \ell, i \in \sS} \bt{w}_i y_i\partial\eta(\<\bt{w}_i, \bt{w}_\ell\>/ n) \\
        &= \frac{y_\ell}{n} \b{W} \bsigma,
    \end{align}
    where 
    \begin{align}
        \sigma_i &= \begin{cases}
            y_i \partial\eta(\<\bt{w}_i, \bt{w}_\ell\>/n) &\text{ if } i \ne \ell, i \in \sS \\
            0 &\text{otherwise.}
        \end{cases}
    \end{align}
    Since $\norm{\bsigma} \le \norm{\b{y}} = 1$, we have that $\norm{\grad_{\bt{w}_\ell}  \<\b{y}, \bt{N}\b{y}\>}^2 \le y_\ell^2 \norm{\b{W}}^2/n^2$.
    Summing over $\ell\in\sS$ we obtain the gradient bound, holding on the good set $\cG_1$:
    \begin{align}
        \norm{ \grad_{\b{W}}\<\b{y}, \bt{N}\b{y}\>}^2 &\le \frac{\sum_{\ell} y_{\ell}^2}{n^2} \norm{\b{W}}^2\\
        &\le \frac{C(n + p)}{n^2} \, ,
        \label{eq:gradientBoundSmallSet}
    \end{align}
    which holds because of triangle inequality and the fact that
    $\sqrt{t} + \sqrt{1-t} \le \sqrt{2}$. We can now apply Lemma \ref{lem:nonconvexConc}
    to bound the RHS of \myeqref{eq:epsNetSmall} and get:
    \begin{align}
        \P\Big\{ \max_{\sS \subseteq[p], |\sS|\le p/A} \proj_{\sS\times \sS} ( \bt{N}) \ge \Delta\Big\} &\le  
        C\abs{T(A)}\,\exp\Big( -\frac{n^2\Delta^2}{C(n+p)}\Big) \nonumber\\ &\quad+ \frac{C}{\Delta^2}\E\big\{\max_{\by\in T} \<\b{y}, \bt{N}\b{y}\>^2  ; \cG_1^c\big\}.
    \end{align}
    We can  simplify the terms on the right-hand side to obtain the result of the lemma. With $\eps = 1/4$, Stirling's
    approximation and Lemma \ref{lem:epsnetcard} we have:
    \begin{align}
        \abs{T(A)} &\le \exp\Big( p\frac{\log CA}{A}\Big).
    \end{align}
    We use a crude bound on the complement of the good set $\cG_1$. It is easy to see that, for any unit vector $\by$, 
    $\<\b{y}, \bt{N}\b{y}\>^2 \le \norm{\bt{N}}_F^2 \le \norm{\b{Z}^\sT \b{Z}}_F^2/n^2$. 
    Cauchy-Schwarz then implies that
    \begin{align}
        \E\{\max \<\b{y}, \bt{N}\b{y}\>^2  ; \cG_1^c\} &\le n^{-2}\big(\E\{\norm{\bZ^\sT \bZ}_F^4\}\big)^{1/2} \P\{\cG_1^c\}^{1/2} \\
        &\le (np)^C \exp(-c(n+p)),
    \end{align}
    where the bound on $\P\{\cG_1^c\}$ follows from Lemma \ref{lem:gaussianmatnorm}. This concludes the lemma.
\end{proof}

Note that Lemma \ref{lem:smallSetLemma}, with $A=1$, tells us that $\lVert{\bt{N}}\rVert_{op}$ 
is of order $\sqrt{p/n +(p/n)^2}$ (uniformly in $\tau$) with high probability. Already this non-asymptotic
bound is non-trivial, since
the previous results of \cite{cheng2013spectrum} and \cite{fan2015spectral} do 
not extend to this case. However, Proposition \ref{prop:noise} is stronger, 
and establishes a rate of decay with the thresholding level $\tau$. 

The second lemma we require controls the Rayleigh quotient $\<\by, \bt{N}\by\>$
when the entries of $\by$ are ``spread out''. 
\begin{lemma}
    \label{lem:smallEntriesNoise}
    Assume that $\tau \le \sqrt{\log p}/2$. Given $A\ge 1$ and a
    unit vector $\by$, let $\sS = \{i: \abs{y_i}\le \sqrt{A/p}\}$ and $\by_{\sS}, \by_{\sS^c}$ denote
    the projections of $\by$ onto supports $\sS, \sS^c$ respectively. We have:
    \begin{align}
        \P\Big\{ \max_{\b{y} \in T^{1/4}_{p}} \lvert \<\b{y}_{\sS}, \bt{N}\b{y}_{\sS}\> \rvert \ge \Delta\Big\}
        &\le C\exp\Big(- \frac{n^2\Delta^2}{L_1^2} + Cp\Big) + (np)^C\exp\big(-c\min(\sqrt{p}, n)\big), \label{eq:noiseBoundyss}
    \end{align}
    for any $\Delta\ge L_1$ where $L_1 = C_1 \sqrt{ A\exp(-\tau^2/16)(n+p)/n^2 }$.
   The same bound holds for $\P\big\{\max_{\by\in T^{1/4}_p}|\<\by_{\sS^c}, \bt{N}\by_{\sS}\> | \ge\Delta \big\}$.
\end{lemma}
\begin{proof}
    We first prove the claim for $\<\by_{\sS}, \btN\by_{\sS}\>$. 
    Firstly, we have $\E\<\b{y}_{\sS}, \bt{N}\by_{\sS}\> = 0$. Consider the ``good set'' $\cG_2$ of pairs $(\b{W}, \b{W'}) \in \reals ^{n\times p} \times \reals^{n\times p}$ 
    satisfying the conditions:
    \begin{align}
        \norm{\b{W}}, \norm{\b{W}'} \le \sqrt{2}(\sqrt{n} + \sqrt{p}) \label{eq:normConditionG2}\, ,\\
        \forall i \in [p], \quad \frac{1}{p} \sum_{j\in [p]\bsl i} |\ind(\<\bt{w}_i, \bt{w}_j\>| \ge \tau\sqrt{n}/2) &\le 2\exp(-\tau^2/16)\label{eq:rowl0normG2}\, ,\\ %
        \forall i \in [p], \quad \frac{1}{p} \sum_{j\in [p]\bsl i} |\ind(\<\bt{w}'_i, \bt{w}'_j\>| \ge \tau\sqrt{n}/2) &\le 2\exp(-\tau^2/16) \label{eq:rowprimel0normG2}\, ,\\ %
        \forall i \in [p], \quad \frac{1}{p}\sum_{j\in [p]} \ind( |\<\bt{w}_i, \bt{w}'_j\>| \ge \tau\sqrt{n}/2) &\le 2\exp(-\tau^2/16). \label{eq:rowcrossl0normG2}\, .%
    \end{align}
    Also, for any pair $\b{W}, \b{W'} \in \cG_2$, for $\b{W}(t) = \sqrt{t}\b{W} + \sqrt{1-t}\b{W'}$ (and its columns
    $\bt{w}(t)_i$ defined appropriately) we have:
    \begin{align}
        \norm{\b{W}(t)} \le \max_{t}(\sqrt{t} + \sqrt{1-t})(\sqrt{2n} + \sqrt{2p}) = 2(\sqrt{n} + \sqrt{p}),   \label{eq:conditionOnPath1}\\
        \forall i \in [p] \quad \frac{1}{p} \sum_{j \in [p]\bsl i} \ind( \<\bt{w}(t)_i, \bt{w}(t)_{j}\> \ge \tau\sqrt{n} ) \le 6\exp(-\tau^2/16).  \label{eq:conditionOnPath2}
    \end{align}
    Equation (\ref{eq:conditionOnPath1}) follows by a simple application of triangle inequality and condition \eqref{eq:normConditionG2} defining
    $\cG_2$. For inequality \eqref{eq:conditionOnPath2}, expanding the product $\<\bt{w}(t)_i, \bt{w}(t)_j\>$:
    \begin{align}
        \<\bt{w}(t)_{i}, \bt{w}(t)_{j}\> &= t \<\bt{w}_{i}, \bt{w}_j\> + (1-t)\<\bt{w}'_i, \bt{w}'_j\> + \sqrt{t(1-t)}\<\bt{w_i}, \bt{w}'_j\>,
    \end{align}
    whence, by triangle inequality and $\sqrt{t(1-t)}< 1$
    \begin{align}
        \ind(\abs{\<\bt{w}(t)_i, \bt{w}(t)_j\>} \ge \tau\sqrt{n}) &\le \ind( \lvert \<\bt{w}_i, \bt{w}_j\>\rvert \ge \tau\sqrt{n}/2) +
        \ind (\lvert \<\bt{w}'_i, \bt{w}'_j\> \rvert \ge \tau\sqrt{n}/2)\nonumber \\ %
        &\quad + \ind(\lvert\<\bt{w}_i, \bt{w}'_j\>\rvert \ge \tau\sqrt{n}/2).
        \label{eq:sparsityOnPath}
    \end{align}
    The gradient of $\<\by_{\sS}, \eta(\b{W}^\sT\b{W}/n)\by_{\sS}\>$ with respect to a column $\bt{w}_{\ell}$ of $\b{W}$ is given by:
    \begin{align}
        \grad_{\bt{w}_{\ell}} \<\by_{\sS}, \eta(\b{W}^\sT\b{W}/n) \b{y}_{\sS}\> &= \frac{y_{\ell}}{n} \sum_{j\in \sS\bsl \ell} y_j\partial\eta\big( %
        \frac{\<\bt{w}_j, \bt{w}_\ell\>}{n} ; \frac{\tau}{\sqrt{n}} \big) \bt{w}_{j} \\
        &= \frac{y_{\ell}}{n} \b{W} \bsigma, \\
        \text{ where }\; \sigma_{i} &= \begin{cases}
            \partial \eta(\<\bt{w}_i, \bt{w}_{\ell}\>/n; \tau/\sqrt{n}) y_{i} &\text{ when } i \in \sS\bsl \ell\\
            0 &\text{ otherwise.}
        \end{cases}
    \end{align}
    Therefore
    \begin{align}
        \norm{ \grad_{\bt{w}_{\ell}} \<\by_{\sS}, \btN\by_{\sS}\>}^2 &\le \frac{y_{\ell}^2}{n^2} \norm{\b{W}}^2 \norm{\bsigma}^2 \\
        &\le \frac{y_{\ell}^2 \norm{\b{W}}^2}{n^2} \sum_{i\ne \ell} (y_i \partial\eta(\<\bt{w}_i, \bt{w}_\ell\>/n))^2 \\
        &\stackrel{(a)}{\le} \frac{y_\ell^2 \norm{\b{W}}^2}{n^2} \sum_{i\ne \ell} \frac{A}{p} \ind( \abs{\<\bt{w}_{i}, \bt{w}_\ell\>}\ge \tau\sqrt{n}) \\
        &\stackrel{(b)}{\le} \frac{y_\ell^2 }{n^2} C(n+p) A\exp(-\tau^2/16)
    \end{align}
    Here $(a)$ follows from fact that the entries of $\by_{\sS}$ are bounded by $\sqrt{A/p}$ and the 
    definition of the soft thresholding function. Inequality $(b)$ follows
    follows when we set $\b{W} = \b{Z}(t) = \sqrt{t}\b{Z} + \sqrt{1-t}\b{Z}'$ and $(\bZ, \bZ') \in \cG_2$. 
    Therefore, summing over $\ell$ we obtain the following bound for the gradient of $\<\by_{\sS}, \btN\by_{\sS}\>$
    \begin{align}
        \norm{ \grad_{\b{Z}(t)}\<\by_{\sS}, \btN\by_{\sS}\>}^2 &\le C_1\frac{A\exp(-\tau^2/16)(n+p)}{n^2} \equiv L_1^2.
    \end{align}
    We can use now Lemma \ref{lem:nonconvexConc}, to get, for $L_1>0$ as defined above and any $\Delta\ge L_1$:
    \begin{align}
        \P\Big\{ \max_{\by\in T^{1/4}_p}\<\by_{\sS}, \btN\by_{\sS}\> \ge \Delta\Big\} &\le C\exp\Big(- \frac{\Delta^2}{CL_1^2} + Cp\Big)\nonumber\\%
        &\quad + CL_1^{-2} \E\{\max_{\by\in T^\eps_p} \<\by_{\sS}, \btN\by_{\sS}\>^2 ; \cG_2\} \\
        &\le C\exp\Big(- \frac{\Delta^2}{CL_1^2} + Cp\Big)
        + C(np)^C\P\{\cG_2^c\}^{1/2}, \label{eq:finalBound}
    \end{align}
    where the last line follows by Cauchy-Schwarz, as in the proof of Lemma \ref{lem:smallSetLemma},
    and the fact that $L_1 \ge (np)^{-C_2}$ using the upper bound $\tau \le \sqrt{\log p}/2$.

    To obtain the thesis, we need to now bound $\P\{\cG_2^c\}$. It suffices
    to control the failure probability of conditions \eqref{eq:normConditionG2}, \eqref{eq:rowl0normG2}, \eqref{eq:rowprimel0normG2}, \eqref{eq:rowcrossl0normG2} 
    of the good set $\cG_2$
    individually, and apply the union bound. For $\b{Z}, \b{Z'}$ independent, $\max(\norm{\bZ}, \norm{\bZ'}) \ge \sqrt{2}(\sqrt{n}+\sqrt p)$
    with probability at most $2\exp(-c(n+p))$ by Lemma \ref{lem:gaussianmatnorm}. 
    Now consider condition \eqref{eq:rowl0normG2} with $i = 1$, without loss of generality. First, for any $h>0$ we have:
    \begin{align}
        \P\Big\{ \frac{1}{p} \sum_{j \ne 1} \ind(\abs{\<\btz_1, \btz_j\> } \ge \tau\sqrt{n}/2) \ge  h \Big\} 
        & \le \P \Big\{ \frac{1}{p} \sum_{j \ne 1} \ind(\abs{\<\btz_1, \btz_j\> } \ge \tau\sqrt{n}/2) \ge  2h ; \norm{\btz_1} \le 2\sqrt{n}\Big\} \nonumber \\%
        &\quad+ \P \big\{ \norm{\btz_1} \ge \sqrt{2n}\big\}. \label{eq:smallLargeNorm}
    \end{align}
    Lemma \ref{lem:chisquare} guarantees that the second term is at most $\exp(-cn)$. 
    To control the first term, we note that, conditional on $\btz_1$, $\<\btz_j, \btz_1\>, j\ne 1$ are independent Gaussian
    random variables with variance $\norm{\btz_1}^2$. Therefore, conditional on $\btz_1$,  $\ind(|\<\btz_1, \btz_j\>|\ge \tau\sqrt{n}/2)$
    are independent Bernoulli random variables with success probability $h_0 = 2\Phi\big(-\tau\sqrt{n}/(2\norm{\bt{z}_1})\big)$, where $\Phi(\cdot)$ is the
    Gaussian cumulative distribution function. 
    It follows, by the Chernoff-Hoeffding bound for Bernoulli random variables that 
    \begin{align}
        \P \Big\{ \frac{1}{p} \sum_{j \ne 1} \ind(\abs{\<\btz_1, \btz_j\> } \ge \tau\sqrt{n}/2) \ge  h \big\vert \btz_1 \Big\}
&\le \exp\big( - p \, D( h  \Vert h_0) \big),
    \end{align}
    where $D(a \Vert b) = a\log (a/b) + (1-a)\log[(1-a)/(1-b)]$. Choosing $h = 4\Phi(-\tau/(2\sqrt{2}))$, and
    conditional on $\norm{\bt{z}_1}\le \sqrt{2n}$, $D(h  \Vert h_0) \ge c h $ for a constant $c$, implying
    that
    \begin{align}
        \P \Big\{ \frac{1}{p} \sum_{j \ne 1} \ind(\abs{\<\btz_1, \btz_j\> } \ge \tau\sqrt{n}/2) \ge  h ; \norm{\btz_1}\le \sqrt{2n} \Big\}
        &\le \exp(-cph).
    \end{align}
    By standard bounds $h = 4\Phi(-\tau/2\sqrt{2}) \le 2\exp(-\tau^2/16)$ and, as $\tau\le \sqrt{\log p}/2$, $h \ge 1/\sqrt{p}$, we have
    \begin{align}
        \P \Big\{ \frac{1}{p} \sum_{j \ne 1} \ind(\abs{\<\btz_1, \btz_j\> } \ge \tau\sqrt{n}/2) \ge  h ; \norm{\btz_1}\le \sqrt{2n} \Big\}
        &\le \exp(-c\sqrt{p}).
    \end{align}
    Combining this with \myeqref{eq:smallLargeNorm} we now get:
    \begin{align}
        \P\Big\{ \frac{1}{p} \sum_{j \ne 1} \ind(\abs{\<\btz_1, \btz_j\> } \ge \tau\sqrt{n}/2) \ge  h \Big\} 
        &\le 2\exp(-c\min(n, \sqrt{p})).
    \end{align}
    A similar bound holds for $i\ne 1$ and the other conditions \eqref{eq:rowprimel0normG2} and \eqref{eq:rowcrossl0normG2},
    whence we have by the union bound that $\P\{\cG_2^c\}\le p^2\exp(-c\min(\sqrt{p}, n))$. This completes
    the proof of the claim \eqref{eq:noiseBoundyss}.

    The proof of the claim for $\<\by_{\sS}, \bt{N}\by_{\sS^c}\>$ is analogous, so we only sketch the points at which it differs from that
    of \myeqref{eq:noiseBoundyss}. We use the same good set $\cG_2$, as defined earlier. Computing the gradient as for $\<\by_{\sS}, \btN\by_{\sS}\>$
    we obtain:
    \begin{align}
        \grad_{\bt{w}_{\ell}}\<\by_{\sS}, \btN\by_{\sS^c}\> &= \frac{y_\ell}{n}\sum_{j\in \sS(\ell)} y_{j} \bt{w}_j \partial \eta\Big(\frac{\<\bt{w}_j, \bt{w}_\ell\>}{n}; \frac{\tau}{\sqrt{n}}\Big).
    \end{align}
    Here $\sS(\ell) = \sS^c$ if $\ell \in\sS$ and $\sS$ otherwise. Define the vector $\bsigma(\ell) \in \reals^{p}$ as
    \begin{align}
        (\bsigma(\ell))_j &= \begin{cases}
            y_\ell y_j\partial\eta\Big( \frac{\<\bt{w}_j, \bt{w}_\ell\>}{n} ; \frac{\tau}{\sqrt{n}} \Big) &\text{ if } j \in \sS(\ell) \\
            0 &\text{ otherwise.}
        \end{cases}
    \end{align}
    As before, we have that $\norm{\grad_{ \bt{w}_{\ell}}\<\by_{\sS}, \btN\by_{\sS^c}\>  } = n^{-1}\norm{\b{W}\bsigma(\ell) } \le n^{-1}\norm{\b{W}}\norm{\bsigma(\ell)}$.
    Therefore, summing over $\ell\in [p]$:
    \begin{align}
        \norm{\grad_{\b{W} }\<\by_{\sS}, \btN\by_{\sS^c}\>  }^2 &\le \frac{\norm{\b{W}}}{n^2}\sum_{\ell\in [p]} \norm{\bsigma(\ell)}^2 \\
        &\le \frac{\norm{\b{W}}^2}{n^2}\sum_{\ell\in [p]} \sum_{j\in \sS(\ell)} y_\ell^2y_j^2 \partial\eta \Big( \frac{\<\bt{w}_j, \bt{w}_\ell\>}{n} ; \frac{\tau}{\sqrt{n}} \Big)\\
        &= \frac{2\norm{\b{W}}^2}{n^2} \sum_{\ell\in \sS} \sum_{j\in \sS^c} y_j^2y_\ell^2 \partial\eta \Big( \frac{\<\bt{w}_j, \bt{w}_\ell\>}{n} ; \frac{\tau}{\sqrt{n}} \Big) \\
        &\le \frac{2\norm{\b{W}}^2}{n^2}\frac{A}{p}\max_{\ell\in[p]} \sum_{j\ne p} \partial\eta \Big( \frac{\<\bt{w}_j, \bt{w}_\ell\>}{n} ; \frac{\tau}{\sqrt{n}} \Big).
    \end{align}
    Under the condition of $\cG_2$, the gradient also satisfies, when evaluated at $\b{W} = \bZ(t)= \sqrt{t}\bZ + \sqrt{1-t}\bZ'$: 
    \begin{align}
        \norm{\grad_{\b{Z}(t) }\<\by_{\sS}, \btN\by_{\sS^c}\>}^2 &\le \frac{CA\exp(-\tau^2/16) (n+p)}{n^2}.
    \end{align}
    The rest of the proof is then the same as before. 
\end{proof}

Given these lemmas, we can now establish Proposition \ref{prop:noise}.
\begin{proof}[Proof of Proposition \ref{prop:noise}]
We use a variant of the $\eps$-net argument of Lemma \ref{lem:smallSetLemma}.
To bound the probability that $\lVert{\bt{N}}\rVert_{op}$ is large, with Lemma
\ref{lem:specnormbnd}, we obtain:
\begin{align}
    \P\big\{ \lVert{\bt{N}\rVert_{op}} \ge \Delta\big\}   &\le \P\Big\{ \max_{\by\in T^{\eps}_{p}} \abs{\<\b{y}, \bt{N}\b{y}\> }
\ge \Delta(1-2\eps)\Big\}.
\end{align}
Let $\sS = \{i: \abs{y_i} \le \sqrt{A/p}\}$ for some $A\ge 1$ to be
chosen later. Then let $\by = \by_{\sS} + \by_{\sS^c} $ denote
the projections of $\by$ onto supports $\sS, \sS^c$ respectively. Since
$\<\b{y}, \bt{N}\b{y}\> = \<\b{y}_{\sS^c}, \bt{N}\b{y}_{\sS^c}\> + \<\b{y}_{\sS}, \bt{N}\b{y}_{\sS}\> + 2\<\b{y}_\sS, \bt{N}\b{y}_{\sS^c}\>$ 
by triangle inequality and union bound:
\begin{align}
    \P\big\{ \lVert{\bt{N}\rVert_{op}} \ge \Delta\big\} &\le 
    \P\Big\{ \max_{\by \in T^{\eps}_{p}} \lvert \<\b{y}_{\sS^c}, \bt{N}\b{y}_{\sS^c}\> \rvert 
    + \lvert \< \by_{\sS}, \bt{N}\b{y}_{\sS}\>\rvert  +2 \lvert \<\b{y}_{\sS}, \bt{N}\b{y}_{sS^c}\> \rvert  \ge \Delta(1-2\eps) \Big\} \\
    &\le \P\Big\{ \max_{\by \in T^{\eps}_{p}} \lvert \<\b{y}_{\sS^c}, \bt{N}\b{y}_{\sS^c}\> \rvert \ge \Delta(1-2\eps)/4\Big\} 
    + \P\Big\{ \max_{\by \in T^{\eps}_{p}} \vert\<\b{y}_{\sS}, \bt{N}\b{y}_{\sS}\> \rvert  \ge \Delta(1-2\eps)/4\Big\} \nonumber\\
    &\quad + \P\Big\{ \max_{\by \in T^{\eps}_{p}} \vert\<\b{y}_{\sS}, \bt{N}\b{y}_{\sS^c}\> \rvert  \ge \Delta(1-2\eps)/4\Big\} \\
    &\le \P\Big\{ \max_{\sS': \abs{\sS'} \le p/A} \norm{ \proj_{\sS'\times\sS'} (\bt{N})} \ge \Delta(1-2\eps)/4\Big\} 
   + \P\Big\{ \max_{\by \in T^{\eps}_{p}} \lvert\<\b{y}_{\sS}, \bt{N}\by_{\sS}\>    \rvert  \ge \Delta(1-2\eps)/4\Big\} \nonumber \\
   &\quad + \P\Big\{\max_{\by\in T^{\eps}_p}\lvert \<\b{y}_{\sS}, \bt{N}\b{y}_{\sS^c}\> \rvert \ge \Delta(1-2\eps)/4   \Big\}\label{eq:smallLargeDecomp}.
\end{align}
With $\eps=1/4$, the first term is controlled by 
Lemma \ref{lem:smallSetLemma}  while the 
final two are controlled by Lemma \ref{lem:smallEntriesNoise}. 
We choose $\eps=1/4$ in \myeqref{eq:smallLargeDecomp}, and 
\begin{align}
    \Delta = \Delta_* &\equiv C\sqrt{\frac{p}{n}\left( 1+\frac{p}{n} \right)\left( \frac{\log A}{A} + A\exp\Big(-\frac{\tau^2}{16}\Big) \right)},
\end{align}
for large enough $C$ so that, using the bounds of Lemmas \ref{lem:smallSetLemma} and \ref{lem:smallEntriesNoise}, we have:
\begin{align}
\P\big\{\bt{N} \ge \Delta_*\big\} &\le C(np)^C \exp\Big[ - c\min\Big(\sqrt{p}, n, p\frac{\log A}{A}\Big)  \Big].
\end{align}
This probability bound is $o(1)$ provided $A$ is not too large: we choose $A = 0.25\sqrt{\tau\exp(\tau^2/16)} \ll \sqrt{p}$
which guarantees that the bound above is $o(1)$ when $n > C\log p$ for some $C$ large enough. This concludes the proposition.
\end{proof}

\subsection{Proof of Proposition \ref{prop:signal}}\label{subsec:proofsignal}

We decompose the empirical covariance matrix (\ref{eq:empcov}) as
\begin{align}
 \proj_\sE(\hbSigma) &= \bSigma + \bDelta_1+\bDelta_2+\bDelta_2^{\sT} +\proj_\sE\Big(\frac{1}{n}\bZ^\sT\bZ-
                       \id_p\Big)\, ,
\label{eq:decompSignal}\\
\bDelta_1 & \equiv \sum_{q,q'=1}^r \sqrt{\beta_q\beta_q'} \Big(\frac{1}{n}\<\bu_q,\bu_q'\>-\bfone_{q=q'}\Big)\bv_q\bv_q'^{\sT}\, ,\\
\bDelta_2 & \equiv \sum_{q=1}^r\frac{\sqrt{\beta_q}}{n}\bv_q(\bZ^{\sT}\bu_q)^{\sT}_{\sQ} \, .
\end{align}
Next notice that, for any $x\in\reals$, 
\begin{align}
    \big|\eta(x)-x\big|\le \frac{\tau}{\sqrt{n}}\, .\label{eq:etaBias}
\end{align}
With a view to employing this inequality, we use
\myeqref{eq:decompSignal} and the triangle inequality:
\begin{align}
\big\| \proj_\sE(\eta(\hbSigma))-\bSigma\big\|_{op} &=
\Big\| \proj_{\sE}\big(\eta(\hbSigma)\big) - \proj_{\sE}\big(\hbSigma\big) - \bDelta_1 -\bDelta_2-\bDelta_2^\sT - \proj_{\sE}\Big(\frac{1}{n}\bZ^\sT\bZ - \id_p \Big)\Big\|_{op} \\
&\le \big\|\proj_{\sE} \big(\eta(\hbSigma) - \hbSigma\big) \big\|_{op}
+ \norm{\bDelta_1}_{op} + 2\norm{\bDelta_2}_{op} + \Big\|\proj_{\sE}\Big(\frac{1}{n}\bZ^\sT\bZ-\id_p\Big)\Big\|_{op} \\
&\le \frac{s_0\tau}{\sqrt{n}} + \norm{\bDelta_1}_{op} + 2\norm{\bDelta_2}_{op} + \Big\|\proj_{\sE}\Big(\frac{1}{n}\bZ^\sT\bZ-\id_p\Big)\Big\|_{op}, \label{eq:SignalDecompose}
\end{align}
where the last line follows by noticing that the first term is supported on
$\sE$ of size $s_0\times s_0$ and then using bias bound \myeqref{eq:etaBias} entry-wise.
%
%
We next bound each of the three terns on the right hand side. 
%

For the first term in Eq~(\ref{eq:SignalDecompose}), note that with a 
change of basis to the orthonormal set $\bv_1,\dots\bv_r$  $\bDelta_1$
is equivalent to an $r\times r$ matrix with entries $M_{qq'}\sqrt{\beta_q\beta_{q'}}$, where $M_{qq'} = \big(\<\bu_q,\bu_q'\>/n-\bfone_{q=q'}\big)$. Denote by $\b{B}\in \reals^{r\times r}$ the diagonal
matrix with $B_{qq} = \sqrt{\beta_q}$ and
by $\b{U}\in\reals^{r\times n}$, the matrix with columns $\bu_1$,\dots $\bu_r$. 
Then, we have, with high probability
\begin{align}
    \|\bDelta_1\|_{op} & = \|\b{B}\b{M} \b{B}\|_{op} \\
    &\le \norm{\b{B}}_{op}^2 \norm{\b{M}}_{op} 
= \beta\|\frac{1}{n}\b{U}^{\sT}\b{U}-\id_{r\times r}\big\|_{op}\\
& \le C\beta\sqrt{\frac{r}{n}}\, .
\end{align}
The last inequality follows from the Bai-Yin law on eigenvalues of Wishart matrices \cite[Corollary 5.35]{Vershynin-CS}.

Consider the second term in Eq~(\ref{eq:SignalDecompose}).  By orthonormality of $\bv_1,\dots,\bv_r$, the matrix $\bDelta_2$ is orthogonally 
equivalent to $\b{B}\bZ_{\sQ}^{\sT}\bU/n$, where we recall that $\bZ_{\sQ}$ denotes the submatrix of $\bZ$
formed by the columns in $\sQ$. Denoting by  $\bP_{U}$ the orthogonal projector onto
the column space of $\bU$, we then have, with high probability,
\begin{align}
\|\bDelta_2\|_{op}&\le \frac{1}{n}\, \|\b{B}\|_{op}\|\bZ_{\sQ}^{\sT}\bP_{U} \bU\|_{op}\\
&\le \frac{\beta}{n}\,\|\bP_{U}\bZ_{\sQ}\|_{op}\| \bU\|_{op}\\
&\le \frac{C\beta}{n}\, \big(\sqrt{s_0}+\sqrt{r}\big) \big(\sqrt{n}+\sqrt{r}\big) \le C\beta \sqrt{\frac{s_0}{n}}\, .
\end{align}
Here the penultimate inequality follows by Lemma \ref{lemma:NormWishart} noting
that, by invariance under rotations (and since $\bP_U$ project onto a random subspace of $r$ dimensions
independent of $\bZ$), $\|\bP_{U}\bZ_{\sQ}\|_{op}$ is distributed as the norm of a matrix with i.i.d. standard normal
entries, with dimensions $|\sQ|\times r$, $|\sQ|\le s_0$.

Finally, for the third term of \myeqref{eq:SignalDecompose} we use the Bai-Yin law of
Wishart matrices \cite[Corollary 5.35]{Vershynin-CS} to obtain, with high probability:
\begin{align}
\Big\|\proj_\sE\Big(\frac{1}{n}\bZ^\sT\bZ - \id_p\Big)\Big\|_{op}&=
\Big\|\frac{1}{n}\bZ_{\sC}^{\sT}\bZ_{\sC} - \id_{s_0} \Big\|_{op} \\
&\le C\sqrt\frac{s_0}{n},
\end{align}

Finally, substituting the above bounds in Eq.~(\ref{eq:SignalDecompose}), we get
\begin{align}
\big\| \proj_\sE(\eta(\hbSigma))-\bSigma\big\|_{op} &=
\frac{\tau s_0}{\sqrt{n}} + C(1+\beta) \sqrt{\frac{s_0}{n}}, 
\end{align}
which implies the proposition.

\subsection{Proof of Proposition \ref{prop:cross}} \label{subsec:proofcross}

Note that $\b{C} = \btC+\btC^{\sT}$ where $\btC = \proj_{\sQ\times \sQ^c} \big(\eta(\hbSigma) \big)$. It is therefore sufficient to 
control $\btC$, and then use triangle inequality.
The proof is similar to that of Proposition \ref{prop:noise}. We let $\bU\in\reals^{n\times r}$ denote the matrix with columns $\bu_1$,
$\bu_2$,\dots $\bu_r$, and introduce the set 
\begin{align}
\cU \equiv \Big\{\bU\in\reals^{n\times r} :\;\; \Big\|\frac{1}{n}\bU^{\sT}\bU -\id_{r\times r}\Big\|_{op}\le 5\sqrt{\frac{r}{n}} \Big\}\, .
\end{align}
We then have
\begin{align}
\prob\big(\|\btC\|_{op}\ge \Delta \big) \le \sup_{\bU\in\cU }\prob\big(\|\btC\|_{op}\ge \Delta\, \big|\, \bU \big)  + \prob\big(\bU\not\in\cU\big)\, .
\end{align}
Notice that, by the Bai-Yin law on eigenvalues of Wishart matrices  \cite[Corollary 5.35]{Vershynin-CS}, 
$\lim_{n\to\infty}\prob(\bU\in \cU)=1$ (throughout $r<c\,n$ for $c$ a small constant). 
It is therefore sufficient to show  $\sup_{\bU\in\cU }\prob\big(\|\btC\|_{op}\ge \Delta\, \big|\, \bU \big)  \to 0$ for $\Delta$ as in the statement of
the theorem. 

In order to lighten the notation, we will write $\tprob(\,\cdot\,) \equiv \prob(\,\cdot\,|\bU)$ and bound the above probability uniformly over 
$\bU\in\cU$. (In other words $\tprob$ denotes expectation over $\bZ$ with $\bU$ fixed).
We first control the norms of small submatrices of $\btC$, following
which we control the full matrix. 
\begin{lemma}\label{lem:smallSetCross}%
    Fix an $A \in [1, p^{1/3}]$,  and let $L =\sqrt{((\beta\vee 1)n+p)/n^2}$. Then, there exists an absolute constant $C>0$ such that, for any $\Delta>0$:
\begin{align}
        \tprob\Big\{\max_{\sC^c \supseteq \sS: |\sS|\le p/A} \|\proj_{\sC\times\sS} \big(\eta(\hbSigma)\big)\|_{op} \ge \Delta \Big\}
        &\le C\exp\Big(C s_0 +  \frac{p\log (CA)}{A} - \frac{\Delta^2}{C L^2} \Big) %
        \nonumber\\%
        &\quad + L^{-2}(np)^C\exp(-n/C).
\end{align}
\end{lemma}
\begin{proof}
    Let, as before, $T^\eps_p(\sS)$ denote the $\eps$-net of unit vectors supported on $\sS \subset \sC^c$ of size at most $p/A$ and let
    $T = \cup_{\sS} T^\eps_p (\sS)$. 
    Then, by
    Lemma \ref{lem:specnormbnd}, with $\eps=1/4$:
    \begin{align}
        \tprob\Big\{\max_{\sS\subseteq\sC^c |\sS|\le p/A} \big\|\proj_{\sC\times\sS} \big(\eta(\hbSigma)\big) \big\|_{op} \ge \Delta \Big\}
        &\le 
        \tprob\Big\{\max_{\b{y}\in T, \b{w}\in T^\eps_{s_0}} \<\b{w},\btC\b{y}\>   \ge \Delta (1-2\eps)/2 \Big\}.
    \end{align}
    It now suffices to control the right hand side via Lemma \ref{lem:nonconvexConc}. We first compute
    the gradients with respect to $\bt{z}_\ell$ as before:
    \begin{align}
        \grad_{\bt{z}_\ell} \<\b{w}, \btC\b{y}\> &= \begin{cases}
            \frac{w_\ell}{n} \sum_{i\in \sC^c} y_i \partial\eta(\<\bt{x}_\ell, \bt{z}_i\>/n) \bt{z}_j &\text{ when } \ell\in \sC, .\\
            \frac{y_\ell}{n} \sum_{i\in \sC} w_i\partial\eta(\<\bt{z}_\ell, \bt{x}_i\>/n)\bt{x}_i &\text{ when } \ell\in \sC^c,  
        \end{cases} 
    \end{align}
    Therefore, arguing as in proof of Proposition \ref{prop:noise} (see Lemma \ref{lem:smallSetLemma}):
    \begin{align}
   \norm{\grad_{\bZ} \<\b{w}, \btC\b{y}\>}^2_F =\sum_{\ell}\norm{\grad_{\bt{z}_\ell} \<\b{w}, \btC\b{y}\>}^2 &\le \frac{\norm{\bZ}^2 + \norm{\b{X}_\sC}^2}{n^2}\,.
\label{eq:GradBound}
    \end{align}
Let $\b{B}\in\reals^{r\times r}$ be the diagonal matrix with entries $B_{q,q} = \sqrt{\beta_q}$, and $\b{V}\in\reals^{p\times r}$ be the matrix with columns
$\bv_1,\dots,\bv_r$. We then have $\bX = \bU\b{B}\b{V}^{\sT}+\bZ$, whence, recalling $\bU\in \cU$, and $r\le c\, n$ with $c$ small enough
\begin{align}
    \|\bX_\sC\|&\le \norm{\bX} \le  \|\bU\b{B}\b{V}^{\sT} \| +\|\bZ\|\\
&\le \sqrt{\beta}\|\bU\|  +\|\bZ \|\le 5\sqrt{\beta n} + \|\bZ\|\, . \label{eq:BoundBx}
\end{align}
Consider the good set $\cG_4$ of pairs $\big( \bZ, \bZ'\big)$    satisfying:
\begin{align}
        \max(\norm{\bZ}, \norm{\bZ'}) &\le \sqrt{2n} + \sqrt{2p} \, ,\\
        \max(\norm{\bZ_\sC}, \norm{\bZ'_\sC}) &\le \sqrt{2n} + \sqrt{2k}\, .
\end{align}
For $  \big( ( \bZ,  \bZ'\big)\in \cG_4$,     and $t\in [0, 1]$, define $\bZ(t) = \sqrt{t}\bZ + \sqrt{1-t}\bZ'$. Now 
Using Eqs.~(\ref{eq:GradBound}) and (\ref{eq:BoundBx}, the gradient $\grad \<\b{w}, \btC\b{y}\>$ evaluated at $\bZ(t)$
    satisfies:
    \begin{align}
        \norm{\grad \<\b{w}, \btC\b{y}\>}^2 &\le \frac{3\norm{\bZ(t)}^2 + 10\beta n}{n^2} \\
        &\le C\frac{(n+p) +\beta n }{n^2} \\
        &\le C\frac{(\beta\vee 1)n+p}{n^2}.
    \end{align}
 Now applying Corollary \ref{corr:nonconvexCS}, 
    for $L = C \sqrt{((\beta\vee 1)n+p)/n^2}$:
    \begin{align}
        \tprob\Big\{\max_{\sS\subseteq\sC^c |\sS|\le p/A} \big\lVert \proj_{\sC\times\sS} \big(\eta(\hbSigma)\big) \big\rVert_{op}\ge \Delta \Big\}
        &\le C|T|\, \exp\Big( - \frac{\Delta^2}{C L^2} \Big) \nonumber\\%
       &\quad + CL^{-2}\tE\{\max_{\b{w}, \by}\<\b{w}, \btC\by\>^4\}^{1/4} \P\{\cG_4\}^{1/2}. \label{eq:smallSetCrossBound1}
    \end{align}
    Let $\eps=1/4$, observing that $T \subseteq \cup_{\sS : |\sS| \le p/A} T^\eps_p(\sS)$, we have the bound (using Lemma \ref{lem:epsnetcard} and Stirling's approximation):
    \begin{align}
        |T| &\le \exp(Cs_0 +  A^{-1}p\log CA),   
    \end{align}
    for some absolute $C$. Now, as in the proof of Proposition \ref{prop:noise}, $|\<\b{w}, \btC\b{y}\>| \le \norm{\b{C}}\le \norm{\b{C}}_F\le \norm{\hbSigma}_F$. 
    From this it follows that $\tE\big\{\max_{\b{w}, \b{y}} \<\b{w}, \btC\b{y}\>^4\big\} \le (np)^{C}$ for some $C$. Finally
    $\P\{\cG_4^c\} \le \exp(-cn)$ using Lemmas \ref{lem:gaussianmatnorm}, \ref{lem:chisquare} and the union bound. 
    Combining these bounds in \myeqref{eq:smallSetCrossBound1} yields the lemma. 
\end{proof}

Now we prove a similar lemma when $\by$ has entries that are ``spread out''.
\begin{lemma}%
    \label{lem:spreadOutCross}
    Fix an $A \in [1, p^{1/3}]$, and a unit vector $\by\in \reals^{\sC^c}$ let $\sS = \{i: \abs{y_i}\le \sqrt{A/p}$,
    and $\by_{\sS}$ denote the projection of $\by$ on the set of indices $\sS$. Then there exists a numerical constant $C$ such that,
    assuming $\tau\le \sqrt{\log p}/2$, we have
 \begin{align}
        \tprob\Big\{ \max_{\b{w}\in T^{\eps}_{\sC}, \by\in T^\eps_{\sC^c}} \<\b{w}, \btC\by_{\sS}\> &\ge \Delta \Big\}   
        &\le C\exp\Big(-\frac{\Delta^2}{CL_*^2} + Cp\Big) + (np)^C\exp\big(-c\min(\sqrt{p}, n)\big),
    \end{align}
    where $L_* = \sqrt{A \exp(-\tau^2/C(\beta\vee 1)) (n(\beta\vee 1)+p)/n^2}$.
\end{lemma}
\begin{proof}
    For simplicity of notation, it is convenient to introduce the vector $\by' = \by_{\sS}$. Throughout the proof, we will use that $\norm{\by'}\le 1$ and
    $\norm{\by'}_{\infty} \le \sqrt{A/p}$. 
    We compute the gradients as follows:
    \begin{align}
        \grad_{\bt{z}_\ell} \<\b{w}, \btC\b{y}'\> &= \begin{cases}
            \frac{w_\ell}{n} \sum_{i\in \sC^c } y'_i \partial\eta(\<\bt{x}_\ell, \bt{z}_i\>/n) \bt{z}_j &\text{ when } \ell\in \sC \\
            \frac{y'_\ell}{n} \sum_{i\in \sC} w_i\partial\eta(\<\bt{z}_\ell, \bt{x}_i\>/n)\bt{x}_i &\text{ when } \ell\in \sC^c\, .
        \end{cases}
    \end{align}
    Therefore we have 
    \begin{align}
        \sum_{\ell\in\sC} \lVert \grad_{\bt{z}_\ell}  \<\b{w}, \btC\b{y}'\> \rVert^2 &\le %
        \sum_{\ell\in\sC} \frac{w_\ell^2}{n^2} \norm{\bZ}^2  \sum_{i\in \sC^c} \big(y'_i \partial\eta(\bt{x}_\ell, \bt{z}_\ell)  \big)^2 \\
        &\le \frac{A\norm{\bZ}^2}{pn^2} \max_{\ell \in \sC}\sum_{i\in \sC^c} \partial\eta(\<\bt{x}_\ell, \bt{z}_i\>/n ), \label{eq:gradzlsCbound}
    \end{align}
    where we used the fact that $\abs{y'_i}\le \sqrt{A/p}$ and that $\partial\eta(\cdot)\in \{0, 1\}$. 
    Similarly, for $\ell\in \sC^c$:
    \begin{align}
        \sum_{\ell\in \sC^c} \lVert \grad_{\bt{z}_\ell}\<\b{w}, \btC\b{y}'\>\rVert^2 %
        &\le \sum_{\ell \in \sC^c} \frac{(y'_i)^2 \norm{\bX_\sC}^2}{n^2} \sum_{i\in \sC} \big(w_i\partial\eta(\<\bt{z}_\ell, \bt{x}_i\>/n)\big)^2 \\
        &= \sum_{i\in \sC} \frac{w_i^2 \norm{\bX_\sC}^2}{n^2} \sum_{\ell\in\sC^c} (y'_\ell)^2 \partial\eta(\<\bt{z}_\ell, \bt{x}_i\>/n)^2 \\
        &\le \frac{A\norm{\bX_\sC}^2}{pn^2} \max_{\ell\in \sC} \sum _{i\in \sC^c}\partial\eta (  \<\bt{z}_i, \bt{x}_\ell\>/n). \label{eq:gradzlsCcbound}
    \end{align}
Combining the bounds in Eqs.\eqref{eq:gradzlsCbound}, \eqref{eq:gradzlsCcbound},  we obtain
\begin{align}
        \lVert \grad_{\bZ} \<\b{w}, \btC\b{y}'\>\rVert_F^2
        & = \sum_{\ell\in [p]} \norm{ \grad_{\bt{z}_\ell}\<\b{w}, \btC\b{y}'\>}^2 \\ 
        &\le
        \frac{2A}{pn^2} (\norm{\bX_\sC}^2 + \norm{\bZ}^2)\max_{i\in\sC} \sum_{j\in \sC^c}%
        \partial\eta(\<\bt{x}_i, \bt{z}_j\>/n ). \label{eq:crossgradbound}
    \end{align}
 With $K=C\beta\vee 1$, we define the good set $\cG_5$ of pairs $(\bZ, \bZ')$
    satisfying
    \begin{align}
        \norm{\bZ}, \norm{\bZ'}&\le \sqrt{2n} + \sqrt{2p}\label{eq:cG5defZnorm} \\
        \forall i\in \sC, \quad \frac{1}{p}\sum_{j\in \sC^c} \ind( \<\bt{x}_i, \bt{z}_j\> \ge \tau\sqrt{n}/2)
        &\le 2\exp(-\tau^2/K) \label{eq:cG5defl0norm}\\
        \forall i\in \sC, \quad \frac{1}{p}\sum_{j\in \sC^c} \ind( \<\bt{x}'_i, \bt{z}'_j\> \ge \tau\sqrt{n}/2)
        &\le 2\exp(-\tau^2/K ) \label{eq:cG5defprimel0norm}\\
        \forall i\in \sC, \quad \frac{1}{p}\sum_{j\in \sC^c} \ind( \<\bt{x}'_i, \bt{z}_j\> \ge \tau\sqrt{n}/4)
        &\le 2\exp(-\tau^2/K) \label{eq:cG5defcrossl0norm}\\
        \forall i\in \sC, \quad \frac{1}{p}\sum_{j\in \sC^c} \ind( \<\bt{x}_i, \bt{z}'_j\> \ge \tau\sqrt{n}/4)
        &\le 2\exp(-\tau^2/K ). \label{eq:cG5defcrossl0norm2}
    \end{align}
Define $\bZ(t) = \sqrt{t}\bZ+\sqrt{1-t}\bZ'$ with $(\bZ, \bZ')\in\cG_5$.
 By \myeqref{eq:crossgradbound} the gradient evaluated at     $ \bZ(t) $ is bounded by
    \begin{align}
        \lVert \grad \<\b{w}, \btC\b{y}\>\rVert  ^2
        &\le \frac{2A}{pn^2} (\norm{\bX_\sC (t)}^2 +\norm{\bZ(t)}^2)\max_{i\in\sC} \sum_{j\in \sC^c}%
        \partial\eta(\<\bt{x}(t)_i), \bt{z}(t)_j\>/n )\\
& \le \frac{CA}{p n^2}((\beta\vee 1) n+p) \max_{i\in\sC} \sum_{j\in \sC^c}%
        \partial\eta(\<\bt{x}(t)_i), \bt{z}(t)_j\>/n )\, ,
    \end{align}
where we bounded $\norm{\bX_\sC (t)}$ as in  Eq.~(\ref{eq:BoundBx}),   
and used $\|\bZ(t)\|_{op}\le 2(\sqrt{n}+\sqrt{p})$, which follows from Eq.~(\ref{eq:cG5defZnorm}) and triangle inequality.
 Furthermore, as
    $\<\bt{x}(t)_i, \bt{z}(t)_j\> = t\<\bt{x}_i, \bt{z}_j\> + (1-t)\<\bt{x}'_i, \bt{z}'_j\> +
    \sqrt{t(1-t)} (\<\bt{x}_i, \bt{z}'_j\> + \<\bt{x}'_i, \bt{z}_j\>)$, we have that:
    \begin{align}
        \partial\eta(\<\bt{x}(t)_i), \bt{z}(t)_j\>/n ) & = \ind( \abs{\<\bt{x}(t)_i, \bt{z}(t)_j\>} \ge \tau\sqrt{n}) \\
        &\le  \ind (  | \<\bt{x}_i, \bt{z}_j\>| \ge \tau\sqrt{n}/2) + \ind(|\<\bt{x}'_i, \bt{z}'_j\>| \ge \tau\sqrt{n}/2) \nonumber \\
        &\quad+ \ind(|\<\bt{x}'_i, \bt{z}_j\>| \ge \tau\sqrt{n}/4)  + \ind(|\<\bt{x}'_i, \bt{z}_j\>|   \ge \tau\sqrt{n}/4).
    \end{align}
    Hence on the good set $\cG_5$, we have:
    \begin{align}
        \max_{i\in \sC}\sum_{j\in \sC^c}\partial\eta(\<\bt{x}(t)_i), \bt{z}(t)_j\>/n ) &\le 4p \, e^{- \tau^2/ K}\, .
    \end{align}
    Therefore the gradient satisfies, on the good set:
    \begin{align}
        \lVert \grad_{\bZ} \<\b{w}, \btC\b{y}\>\rVert  ^2
        &\le C\frac{A}{n^2} ((\beta\vee 1)n+p) \, e^{-\tau^2/K}  = CL_*^2\, .
    \end{align}
    Hence, by Lemma \ref{lem:nonconvexConc}, we obtain:
    \begin{align}
        \tprob\Big\{ \max_{\b{w}\in T^{\eps}_{\sC}, \by\in T^\eps_{p}} \<\b{w}, \btC\by'\> \ge \Delta \Big\}
        \le & C|T^\eps_{\sC}||T^\eps_{p}|\exp\Big(-\frac{\Delta^2}{C L_*^2} \Big) \label{eq:concLemmaApp}\\
& + CL_*^{-2}\tE\{\max \<\b{w}, \btC\b{y}'\>^4 \}^{1/4}\P\{\cG_5^c\}^{1/2}\, . \nonumber
    \end{align}
    By Lemma \ref{lem:epsnetcard}, keeping $\eps=1/4$ we have that the first term is at most $C \exp(Cp + \exp(-\Delta^2/CL_*^2))$.
    For the second term, we have $|\<\b{w}, \btC\b{y}\>| \le \norm{\btC}\le \norm{\btC}_F \le \norm{\hbSigma}_F$. Since
    $\E\{\lVert \hbSigma\rVert_F^4\} \le (np)^C$, 
    we have that $\E\{\max_{\b{w}, \b{y}}\<\b{w}, \btC\b{y}\>^4\}^{1/4} \le (np)^C$.
    Also as $\tau <\sqrt{\log p}$, $L_* \ge (np)^{-C}$, implying that the
    second term is bounded above by $(np)^{C} \P\{\cG_5^c\}^{1/2}$. Therefore:
    \begin{align}
        \tprob\Big\{ \max_{\b{w}\in T^{\eps}_{\sC}, \by\in T^\eps_{p}} \<\b{w}, \btC\by'\> \ge \Delta \Big\}%
        &\le C\exp\Big(Cp - \frac{\Delta^2}{C L_*^2} \Big) + (np)^C\P\{\cG_5^c\}^{1/2}\, . %
        \label{eq:concLemmaApp2}
    \end{align}

    It remains to control the probability of the
    bad set $\cG_5^c$. For this, we control the probability of violating
    any one condition among \eqref{eq:cG5defZnorm}, \eqref{eq:cG5defl0norm},
    \eqref{eq:cG5defprimel0norm}, \eqref{eq:cG5defcrossl0norm} and \eqref{eq:cG5defcrossl0norm2} defining
    $\cG_5$ and then use the union bound. 
    By Lemmas \ref{lem:gaussianmatnorm}, condition     \eqref{eq:cG5defZnorm} hold with probability 
    $1- C\exp(-cn)$. 
    The argument controlling the probability for conditions \eqref{eq:cG5defl0norm},
    \eqref{eq:cG5defprimel0norm}, \eqref{eq:cG5defcrossl0norm} and \eqref{eq:cG5defcrossl0norm2} to hold
    are essentially the same, so we restrict ourselves to condition \eqref{eq:cG5defl0norm} keeping $i = 1 \in \sC$,
    without loss of generality. 
    Conditional on $\bt{x}_1$, $\<\bt{x}_1, \bt{z}_j\>$ for $j\in \sC^c$
    are independent $\normal(0, \norm{\bt{x}_1}^2)$ variables. Therefore, conditional on $\bt{x}_1$, 
    $\ind( | \<\bt{x}_1, \bt{z}_j\>|\ge \tau\sqrt{n}/2)$ are independent Bernoulli random variables
    with success probability $\Phi\{-\tau\sqrt{n}/2\norm{\bt{x}_1} \}$. Define $h_1$ to be the success probability, i.e.
    $h_1 = \Phi(-\tau\sqrt{n}/(2\norm{\bt{x}_1}))$.    

Since $K = C(\beta\vee 1)$ we can enlarge $C$ to a large absolute constant. 
Letting $\b{V}\in\reals^{n\times r}$ be the matrix with 
columns $\bv_1,\dots,\bv_r$, and $\b{B}$ the diagonal matrix with $B_{q,q} = \sqrt{\beta_q}$, we have,
with probability at least $1-\exp(-n/C)$,
\begin{align}
\|\bt{x}_1\|\le \|\bU\b{B}\b{V}^{\sT}\b{e_1}\|+\|\bt{z}_1\|\le \|\b{B}\|\|\bU\|+\|\bt{z}_1\|\le \sqrt{\frac{Kn}{4}}\, ,
\end{align}
where the last equality holds since $\bU\in\cU$ and by tail bounds on chi-squared random variables.
Further
    \begin{align}
        \tprob \Big\{  \sum_{j\in \sC^c} \ind( |\<\bt{x}_1, \bt{z}_j\> \ge \tau\sqrt{n}/2) \ge |\sC^c| h \Big\} &\le \tprob\{\norm{\bt{x}_1}^2 \ge Kn\}  \nonumber\\%
        &+\sup_{\norm{\bt{x}_1}^2 \le Kn } \tprob \Big\{  \sum_{j\in \sC^c} \ind( |\<\bt{x}_1, \bt{z}_j\> \ge \tau\sqrt{n}/2) \ge |\sC^c| h  \;\Big|\; \bt{x}_1\Big\}. 
\label{eq:conditionalBound} 
    \end{align}
By the above argument, the first term is at most $\exp(-n/C)$ and
    we turn to the second term. By the Chernoff bound
    \begin{align}
  \tprob \Big\{  \sum_{j\in \sC^c} \ind( |\<\bt{x}_1, \bt{z}_j\> \ge \tau\sqrt{n}/2) \ge \abs{\sC^c}h  \,\,\,\big| \bt{x}_1 \Big\} &\le \exp\big(- |\sC^c| D(h||h_1)\big), ,
    \end{align}
with $h_1<\exp(-\tau^2/K)$ when $\norm{\bt{x}_1}^2 \le Kn/4$. Choosing  $ h = 2\exp(-\tau^2/K)$ implies that $h_1  \le  h/2$ when
   and, thereby, 
    that $D(h\Vert h_1) \ge h/C$. Further since $\tau <\sqrt{\log p}/2$, $h \ge 1/\sqrt{p}$. This implies that 
    \begin{align}
        \exp(-|\sC^c| D(h-h_1 \Vert h_1)) = \exp(-(p-s_0)h/C) \ge \exp(-\sqrt{p}/C).
    \end{align}
    Combining this with \myeqref{eq:conditionalBound} we have that $\P\{\cG^c\} \le Cp^2\exp(-\min(n, \sqrt{p})/C)$ for some absolute $C$. 
    Plugging this in \myeqref{eq:concLemmaApp2} yields the lemma.
\end{proof}

We are now ready to prove Proposition \ref{prop:cross}. Indeed, as in Proposition \ref{prop:noise}, for any unit vector $\b{y}\in\reals^{\sC^c}$, 
let $\sS = \{i: \abs{y_i} \ge \sqrt{A/p}\}$ and $\b{y}_{\sS}, \b{y}_{\sS^c}$ denote the projections 
on the indices in $\sS, \sS^c$ respectively. 
\begin{align}
    \tprob\Big\{ \norm{\btC_1} \ge \Delta\} &\le \tprob\Big\{ \max_{\b{w}\in T^\eps_\sC, \b{y}\in T^\eps_{\sC^c}} |\<\b{w}, \btC\b{y}\>| \ge \Delta(1-2\eps) \Big\} \\
    &\le \tprob\Big\{ \max_{\b{w}\in T^\eps_{\sC}, \b{y}\in T^\eps_{\sC^c} } |\<\b{w}, \btC\b{y}_{\sS}\>| \ge \Delta(1-2\eps)/2\Big\} \nonumber \\&\quad+
    \P\Big\{ \max_{\b{w}\in T^\eps_{\sC}, \b{y}\in T^\eps_{\sC^c} } |\<\b{w}, \btC\b{y}_{\sS^c}\>| \ge \Delta(1-2\eps)/2\Big\}.
\end{align}
As before, we will let $\eps=1/4$. 
The first term is controlled via Lemma \ref{lem:smallSetCross}, while the second is controlled
by Lemma \ref{lem:spreadOutCross}. We keep $\Delta = \Delta_*$ where
\begin{align}
    \Delta_* &= C\Big( L_* \sqrt{p} + L \sqrt{  \frac{ p\log A}{ A}}\Big).
\end{align}
so that, via the bounds of Lemmas \ref{lem:smallSetCross}, \ref{lem:spreadOutCross} and that $s_0 ^2 \le p$:
\begin{align}
    \P\{\norm{\bC_1} \ge \Delta_* \} &\le C \exp\Big( -c \frac{p\log A}{A} \Big) + L_*^{-2} (np)^C\exp\big(-c \min(\sqrt{p}, n)\big).   
\end{align}
We now set $A = \big((\tau^2/K) \exp(\tau^2/K)\big)^{1/2}$  with $K = C(\beta\vee 1)$ for a suitable constant $C$
and, since $\tau\le \sqrt{\log p}/2$, we get that $A \le p^{1/3}$. Furthermore, it is straightforward to see that $L\ge (np)^{-C}$, 
and this implies that
\begin{align}
    \P\{\norm{\bC_1} \ge \Delta_* \} &\le (np)^{C}\exp(-c\min(\sqrt{p}, n)) = o(1).
\end{align}
With this setting of $A$, we get the form of $\Delta_*$ below, as required for the proposition. 
\begin{align}
    \Delta_* &\le C\,e^{-c\tau^2/K} \sqrt{\frac{\tau^2 \vee 1}{K} \cdot \frac{pn(\beta\vee 1)+ p^2}{n^2} } \\
    & \le C\,(\tau\vee 1) e^{-c\tau^2/K} \sqrt{\frac{p}{n}} \vee \frac{p}{n}.  
\end{align}

\subsection{Proof of Proposition \ref{prop:diag}}\label{subsec:proofdiag}

Since $\b{D}$ is a diagonal matrix, its spectral norm is bounded by the maximum of its entries. 
This is easily done as, for every $i\in \sC^c$:
\begin{align}
    \abs{(\b{D})_{ii}} &= \left|\eta\Big( \frac{\norm{\btz_i}^2}{n} - 1;\frac{\tau}{\sqrt{n}} \Big)\right| \\
  &\le \Big|\frac{\norm{\btz_i}^2-n}{n}\Big|\,.
\end{align}
By the Chernoff bound for $\chi^2$-squared random variables as in Lemma \ref{lem:chisquare} followed by the union bound,
with probability $1- o(1)$:
\begin{align}
    \max_{i}\Big|\frac{\norm{\btz_i}^2}{n}-1\Big|\le C\sqrt{\frac{\log p}{n}}
\end{align}
for some absolute $C$. Here we used the fact that $(\log p) /n < 1$. 

\subsection{Proof of Proposition \ref{prop:largeThresh}}\label{subsec:prooflargethresh}

It suffices to show that with probability $1-o(1)$
\begin{align}
    \max_{i, j \in \sF\cup \sG} |\widehat{\Sigma}_{ij}|&\le \frac{\tau}{\sqrt{n}} = C_0(\beta\vee 1)\sqrt{\frac{\log p}{n}}. 
\end{align}
This is a standard argument \cite[Lemma A.3]{bickel2008regularized} where
(following the dependence on $\beta$) it suffices to take $\tau \ge C_0 (\beta\vee 1)\sqrt{\log p}$ for $C_0$ a sufficiently large absolute constant. We note here that the same can also
be proved via the conditioning technique applied in the
proofs of Propositions \ref{prop:noise} and \ref{prop:cross}.

\section{Proof of Theorems \ref{thm:main}}\label{sec:proofmain}

Throughout this section, to lighten notation, we drop the prime from
$\hbSigma'$ and $\bX'$ while keeping in mind that these are independent from
$\hbv_1,\dots,\hbv_r$. We further write $\bX = \bU\bB\bV^{\sT}+\bZ$, where $\bU\in\reals^{n\times r}$
is the matrix with columns $\bu_1,\dots,\bu_r$, $\bB$ is diagonal with $B_{ii}=\sqrt{\beta_i}$ and
$\bV\in\reals^{p\times r}$ has columns $\bv_1,\dots,\bv_r$. 

Define the event
\begin{align}
\cU \equiv \Big\{\bU\in\reals^{n\times r} :\;\; \Big\|\frac{1}{n}\bU^{\sT}\bU -\id_{r\times r}\Big\|_{op}\le 3\sqrt{\frac{r}{n}} \Big\}\, .
\end{align}
By the Bai-Yin law on eigenvalues of Wishart matrices \cite{Vershynin-CS}, 
$\lim_{n\to\infty}\prob(\bU\in \cU)=1$. In the rest of the proof, we will therefore assume $\bU\in\cU$ fixed,
and denote by $\tprob(\,\cdot\,) = \prob(\,\cdot\, |\bU)$ the expectation conditional on $\bU$. In other words,
$\tprob(\,\cdot\,)$ denotes expectation with respect to $\bZ$. 

Note that
\begin{align}
\hbSigma = \frac{1}{n} \bV\bB\bU^{\sT}\bU\bB\bV^{\sT}+\frac{1}{n}
\bZ^{\sT}\bU\bB\bV^{\sT}+\frac{1}{n}\bV\bB\bU^{\sT}\bZ+ \frac{1}{n}\bZ^{\sT}\bZ-\id\, .
\end{align}
We then have, for $q\in \{1,\dots, r\}$ and $i\in\{1,\dots,p\}$,
\begin{align}
\big|(\hbSigma\hbv_q)_i&-\beta_q\<\bv_q,\hbv_q\>\,v_{q,i}\big|\le T^{(1)}_{i,q} +T^{(2)}_{i,q}+T^{(3)}_{i,q}\, ,\\
T^{(1)}_{i,q}&\equiv \Big|\frac{1}{n}\<\bfe_i,\bV\bB\bU^{\sT}\bU\bB\bV^{\sT}\hbv_q\>-\beta_q\<\bv_q,\hbv_q\>v_{q,i}\Big|\, ,\\
T^{(2)}_{i,q}& \equiv \frac{1}{n}\,\Big|\<\bZ,\big[(\bU\bB\bV^{\sT}\bfe_i) \hbv_q^{\sT}+(\bU\bB\bV^{\sT}\hbv_q)\bfe_i^{\sT}\big]\>\Big|\, ,
\\
T^{(3)}_{i,q}& \equiv \Big|\<\bfe_i,\Big(\frac{1}{n}\bZ^{\sT}\bZ-\id\Big)\hbv_q\>\Big|\, .
\end{align}
We next bound, with high probability, $\max_{i,q} T^{(a)}_{i,q}$ for $a\in\{1,2,3\}$.
Throughout we let $\eps \equiv \max_{q\in [r]}\|\hbv_q-\bv_q\|$.

Considering the first term, we have
\begin{align}
T^{(1)}_{i,q}&\le \Big|\<\bfe_i,\bV\bB\Big(\frac{1}{n}\bU^{\sT}\bU-\id\Big)\bB\bV^{\sT}\hbv_q\>\Big|+
\Big|\<\bfe_i,\bV\bB^2\bV^{\sT}\hbv_q\>-\beta_q\<\bv_q,\hbv_q\>v_{q,i}\Big|\\
& \le 2\beta\sqrt{\frac{r}{n}}+\beta\eps\sqrt{r}\max_{q'\in[r]\setminus q}\, |v_{q',i}|\, ,\label{eq:Bound1}
\end{align}
where in the last inequality we used $\sum_{q'\in [r]\setminus q}\<\bv_{q'},\hbv_q\>^2 \le 1-\<\bv_{q},\hbv_q\>^2\le \eps^2/2$.

Consider next the second term. Since $Z_{ij}\sim_{iid}\normal(0,1)$, it follows that 
$T^{(2)}_{i,q}=|W_{i,q}|$, for $W_{i,q}\sim\normal(0,\sigma^2_{i,q})$  a Gaussian random variable with variance
\begin{align}
\sigma^2_{i,q} &= \frac{1}{n^2}\big\|(\bU\bB\bV^{\sT}\bfe_i) \hbv_q^{\sT}+(\bU\bB\bV^{\sT}\hbv_q)\bfe_i^{\sT}\big\|_F^2\\
& \le \frac{2}{n^2}\Big\{\|\bU\bB\bV^{\sT}\bfe_i\|^2 +\|\bU\bB\bV^{\sT}\hbv_q\|^2\Big\}\\
& \le \frac{4}{n^2}\|\bU\bB\bV^{\sT} \|_{op}^2 \\
& \le \frac{4}{n^2}\|\bU\|_{op}^2\|\bB\|_{op}^2 \le \frac{8\beta^2}{n}\, .
\end{align}
By union bound over $i\in [p]$, $q\in [r]$ we obtain
\begin{align}
\max_{i\in [p],q\in [r]}T^{(2)}_{i,q}\le 8\beta\sqrt{\frac{\log p}{n}}\, . \label{eq:Bound2}
\end{align}

Finally, consider the last term. By rotational invariance of $\bZ$, the distribution of $T^{(3)}_{i,q}$
only depends on the angle between $\bfe_i$ and $\hbv_q$. Calling this angle $\vartheta$,
we have
\begin{align}
T^{(3)}_{i,q}&\ed\Big|\<\bfe_1, \Big(\frac{1}{n}\bZ^{\sT}\bZ-\id\Big)\bfe_1\>\cos\vartheta +
\<\bfe_1, \Big(\frac{1}{n}\bZ^{\sT}\bZ-\id\Big)\bfe_2\>\sin\vartheta\Big|\\
&\le \Big|\frac{1}{n}\|\btz_1\|^2-1\Big|+\Big|\frac{1}{n}\<\btz_1,\btz_2\>\Big|\, .
\end{align}
Both of these terms have Bernstein-type tail bonds, whence
\begin{align}
\tprob\Big(T^{(3)}_{i,q}\ge \frac{t}{\sqrt{n}}\Big)\le 2\, \exp\big\{-c\min(t\sqrt{n},t^2)\big\}\, .
\end{align}
Using $t = C_0\sqrt{\log p}$, and recalling that $n\ge C\log p$ for $C$ a large constant, we
obtain  $\tprob\big(T^{(3)}_{i,q}\ge C_0\sqrt{(\log p)/n}\big)\le 2\, p^{-10}$. Hence by union bound
\begin{align}
\max_{i\in [p],q\in [r]}T^{(3)}_{i,q}\le C_0\sqrt{\frac{\log p}{n}}\, . \label{eq:Bound3}
\end{align}

By putting together Eqs.~(\ref{eq:Bound1}), (\ref{eq:Bound2}), (\ref{eq:Bound3}), 
and using assumption {\sf A2}, we get
\begin{align}
\big|(\hbSigma\hbv_q)_i-\beta_q\<\bv_q,\hbv_q\>\,v_{q,i}\big|&\le C\beta\sqrt{\frac{r}{n}}+
C(\beta\vee 1)\sqrt{\frac{\log p}{n}} + \beta\eps\gamma\sqrt{r} \, |v_{q,i}|\; \ind(i\in\sC)\, .
\end{align}
Let $\hsC_q = \{i\in [p]: \; |(\hbSigma'\hbv_q)_i|\ge \rho \}$.
We claim that the above implies that, with high probability, $\sC_q \subseteq \hsC_q\subseteq \sC$ for all $q$.

For  $i\not\in \sC$, we have
\begin{align}
\big|(\hbSigma\hbv_q)_i\big|&\le C\beta\sqrt{\frac{r}{n}}+
C(\beta\vee 1)\sqrt{\frac{\log p}{n}} \\
& < \frac{\betamin\theta}{2\sqrt{s_0}}\, ,
\end{align}
where the last inequality follows from Eq.~(\ref{eq:Nsupport}). 

On the other hand,  By Theorem \ref{thm:corr} and using the assumption (\ref{eq:Nsupport}),
we can guarantee 
\begin{align}
\eps\le \frac{1}{8}\Big(\frac{\betamin}{\beta\gamma\sqrt{r}}\, \wedge \, 1\,\Big)\, .
\end{align}
Hence for $i\in \sC_q$, and considering --to be definite-- $v_{q,i}>0$, we get
\begin{align}
(\hbSigma\hbv_q)_i &\ge \beta_q\<\bv_q,\hbv_q\>\,v_{q,i} -C\beta\sqrt{\frac{r}{n}}-
C(\beta\vee 1)\sqrt{\frac{\log p}{n}}-\beta\eps\gamma\sqrt{r} \, |v_{q,i}|\\
&\ge \betamin\Big(1-\eps -\frac{\beta}{\betamin}\eps\gamma\sqrt{r}\Big)v_{q,i}
-C\beta\sqrt{\frac{r}{n}}- C(\beta\vee 1)\sqrt{\frac{\log p}{n}}\\
&\ge \frac{3\betamin\theta}{4\sqrt{s_0}} - C\beta\sqrt{\frac{r}{n}}- C(\beta\vee 1)\sqrt{\frac{\log p}{n}}\\
& >\frac{\betamin\theta}{2\sqrt{s_0}}\, .
\end{align}
where, in the first inequality, we used $\<\bv_q,\hbv_q\>\ge 1-\eps$.

This concludes the proof. Keeping track of the dependence on $\theta$, $\gamma$, $\beta$, $\betamin$,
we get that the following conditions are sufficient for the theorem's conclusion to hold (with $C$ a suitable 
numerical constant):
\begin{align}
n & \ge C\, \frac{(\beta^2\vee 1)}{\betamin^2\theta^2}\, s_0 \log p\, ,\\
n & \ge C\,  \frac{\beta^2}{\betamin^2\theta^2}\, r s_0 \, ,\\ 
n & \ge C\, \left\{\frac{\beta^4\vee\beta^2}{\betamin^2}\gamma^2\right\} \, r\, s_0^2\log\frac{p}{s_0^2}\, ,\\
n & \ge C\, \frac{(\beta^2\vee 1)}{\betamin^2}\, s_0^2\log\frac{p}{s_0^2}\, .
\end{align}
All of these conditions are implied by the assumptions of Theorem \ref{thm:main}, namely
Eq.~(\ref{eq:Nsupport}). In particular, this is shown by using the fact that $s_0\log p\le s_0^2\log(p/s_0^2)$
for  $s_0\le \sqrt{p}$.

\section*{Acknowledgements}

We are grateful to David Donoho for his feedback on this manuscript.
This work was
partially supported by the NSF CAREER award CCF-0743978, the NSF grant CCF-1319979, and
the grants AFOSR/DARPA FA9550-12-1-0411 and FA9550-13-1-0036.

\bibliographystyle{amsalpha}
 \bibliography{all-bibliography}
 
\end{document}